\newtheorem{thm}{Theorem}[section]
\newtheorem{theorem}{Theorem}
\newtheorem*{lemma*}{Lemma}
\newtheorem{cor}[thm]{Corollary}
\newtheorem{lem}[thm]{Lemma}
\newtheorem{prop}[thm]{Proposition}
\newtheorem{defn}[thm]{Definition}
\newtheorem{fact}[thm]{Fact}
\newtheorem{notat}[thm]{Notation and terminology}
\numberwithin{equation}{section}
 \newcommand{\Real}{\mathbb{R}}
 \newcommand{\Complex}{\mathbb{C}}
 \newcommand{\set}[1]{\left\{#1\right\}}
 \newcommand{\norm}[1]{\left\Vert#1\right\Vert}
\author{E. Glakousakis and S. Mercourakis}
\title{On the existence of 1-separated sequences on the unit ball of a finite dimensional Banach space}
\begin{document}
\maketitle
\thispagestyle{fancy}
\lfoot{\vspace{1.2cm}\small{\textit{2010 Mathematics Subject Classification: Primary 46B20, 52A21; Secondary 52C99.}}}
\begin{abstract}
Given a finite dimensional Banach space $X$ and an Auerbach basis $\set{(x_{k},x_{k}^{*}): 1\leq k\leq n}$ of $X$, it is proved that there exist $n+1$ linear combinations $z_{1}, \ldots, z_{n+1}$ of $x_{1}, \ldots, x_{n}$ with coordinates $0, \pm 1$, such that $\norm{z_{k}}=1$, for $k=1, 2, \ldots, n+1$ and $\norm{z_{k}-z_{l}}>1$, for $1\leq k< l\leq n+1$.
\end{abstract}


\section*{Introduction}

\hspace*{0.5cm} In 1975 C. Kottman \cite{Kottman} proved that in each infinite dimensional Banach space $X$ there exists an infinite subset $\Delta$ of $S_{X}$ (the unit sphere of $X$) such that $\norm{x-y}>1$ for every pair $x, y \in \Delta$ with $x\neq y$. Actually, he proved that such a set always can be found in the set of linear combinations with coordinates $0,1,-1$ of any Auerbach system in $X$.

Let us denote by $c_{00}$ the linear space of real sequences which are eventually zero and by $e_{1}, \ldots, e_{k}, \ldots$ its usual basis. We also set $\mathcal{U}=c_{00}\bigcap\set{0,\pm 1}^{\mathbb{N}}$. The core of Kottman's argument is the following lemma.
\begin{lemma*}\emph{(Kottman)}\label{lem1.1}
There is no subset $A$ of $\mathcal{U}$ such that
\begin{itemize}
  \item [$(a)$] $e_{k}\in A$, for $k=1, 2, \ldots$
  \item [$(b)$] $A$ is symmetric, that is, if $x\in A$, then $-x\in A$ and
  \item [$(c)$] if $B$ is any infinite subset of $A$, then there exist $x, y\in B$ with $x\neq y$ such that $x-y\in A$.
\end{itemize}
\end{lemma*}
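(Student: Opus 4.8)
The plan is to argue by contradiction: assume a set $A$ satisfies $(a)$, $(b)$, $(c)$, and produce an infinite subset of $A$ whose pairwise differences all lie outside $A$, contradicting $(c)$. It is convenient to phrase $(c)$ graph-theoretically. Form the graph $G$ on the vertex set $A$ joining $x,y$ exactly when $x-y\in A$; this is well defined and symmetric by $(b)$. Then $(c)$ says precisely that $G$ has no infinite independent set, and the whole task is to exhibit one.

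The basic engine will be the infinite Ramsey theorem. Given any infinite $\mathcal F\subseteq A$, I would $2$-colour each pair $\set{x,y}$ according to whether $x-y\in A$. A homogeneous set in the ``$x-y\notin A$'' colour is an infinite independent set and finishes the proof, so I may assume that every infinite $\mathcal F\subseteq A$ contains an infinite \emph{clique}, i.e.\ an infinite subfamily all of whose pairwise differences lie in $A$. The decisive structural remark is that a clique is automatically \emph{compatible}: since its differences lie in $A\subseteq\mathcal U$, no coordinate can carry opposite signs $+1$ and $-1$ across two clique members, as that would force a $\pm 2$ in the difference. Applying the dichotomy first to $\set{e_k}$ (which is compatible, the $e_k$ having disjoint supports) yields an infinite set of indices on which $e_i-e_j\in A$ for all $i\neq j$, using $(b)$ to obtain both orders.

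From this seed I would \emph{climb}: feed the dichotomy carefully chosen infinite families built from already-harvested members of $A$, so that each successful clique step deposits into $A$ new elements of strictly larger support. The aim is to manufacture inside $A$ an infinite \emph{pairwise-conflicting} family, one in which every two members take opposite signs at some common coordinate; the differences of such a family then contain a $\pm 2$, hence lie outside $\mathcal U\supseteq A$, so the family is exactly the independent set I want. A pigeonhole/sunflower observation explains why genuine climbing is forced: any infinite pairwise-conflicting family must have unbounded weight, since supports of bounded size admit an infinite sunflower whose conflicts are confined to a fixed finite core, and pinning down the finitely many sign patterns on that core produces two members that agree there and so do not conflict. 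Thus the conflicting family cannot live at any single level and must be assembled as the support sizes grow.

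The hard part will be termination and adaptivity. The independent family one ends up with genuinely depends on $A$: for a thin $A$, say $A=\set{0,\pm e_k}$, the set $\set{e_k}$ already works and the very first Ramsey step returns the difference-free colour, whereas for a fat $A$, such as all $x$ with $\sum_i x_i\in\set{-1,0,1}$, one must reach an infinite conflicting family of unbounded weight. The technical crux is therefore to control the uncontrolled homogeneous set produced by Ramsey, since one cannot in general force a prescribed element to survive, and to guarantee that each climbing stage either deposits the elements needed for the next conflict or else returns a difference-free colour. I would handle this by maintaining throughout an infinite reservoir inside $A$ and only ever harvesting generic differences from it, relabelling to normalise supports, so that at each stage an infinite independent set is either completed or the reservoir is refreshed at a higher level. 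Proving that this process must halt in an independent set for \emph{every} admissible $A$ is the main obstacle.
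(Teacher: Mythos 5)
Your reduction to finding an infinite independent set in the graph $G$ on $A$ with edges $\set{x,y}$ for $x-y\in A$ is sound, as are the two structural observations: an infinite clique is coordinatewise compatible (a conflict would put a $\pm 2$ in the difference), and an infinite pairwise-conflicting family is automatically independent. But everything after that is a plan rather than a proof, and the plan stops exactly where the content of the lemma begins. After the first Ramsey step you hold an infinite set of indices $I$ with $e_i-e_j\in A$ for $i\neq j\in I$; you then need to manufacture, inside $A$, an infinite family in which \emph{every} pair conflicts at some common coordinate. Your own sunflower remark shows this cannot happen at bounded weight, so you must produce elements of $A$ of unbounded weight with tightly correlated sign patterns. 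No mechanism in the proposal does this: the Ramsey dichotomy, applied to whatever family you feed it, returns an uncontrolled infinite homogeneous subfamily (you concede you cannot force prescribed elements to survive), its cliques only hand you \emph{differences} of compatible vectors, and you never specify which families to submit at the next stage, why their clique-differences conflict with the previously harvested vectors, or why the recursion terminates. Since you explicitly flag ``proving that this process must halt \ldots is the main obstacle,'' the proposal as written does not establish the lemma; the gap is not a detail but the entire combinatorial core.

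For comparison, the paper's route (Proposition \ref{prop2.10}) avoids Ramsey entirely and is finitary: Proposition \ref{prop2.5} shows, by a direct case analysis on the last coordinate $\xi\in\set{0,\pm1}$, that any difference-free $l$-subset of $pr_{N}A$ lifts to a difference-free $(l+1)$-subset of $A\subseteq C_{N+1}$ consisting of extensions of the old elements together with one new point $z$; iterating along the projections $A_{n}=pr_{n}A$ produces a coherent chain $B_{n}$ with $\abs{B_{n}}=n+1$, and the set of $x\in A$ whose projections eventually land in the $B_{n}$ is the desired infinite difference-free set. If you want to salvage your approach, you would need to replace the vague ``climbing'' by an explicit extension lemma of this kind, at which point the Ramsey machinery becomes superfluous.
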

It is immediate that the above Lemma can be stated as follows.
\\ \\
\textit{If $A$ is any subset of $\mathcal{U}$ satisfying conditions (a) and (b), then there exists an infinite $B\subseteq A$ such that if $x, y\in B$ and $x\neq y$, then $x-y\notin A$.} Thus $A$ contains an infinite subset $B$ which is difference free with respect to $A$.
\\ \\
The aim of this note is to investigate the validity of analogous results in finite dimensional spaces. The note is divided in two sections. The main results of the first section are the following.
\begin{theorem}\label{th1.2}
For every $n\in\mathbb{N}$, there is no subset of $A$ of the cube $C_{n}=\set{0,\pm 1}^{n}$ such that
\begin{itemize}
  \item [$(a)$] $e_{k}\in A$, for $1\leq k\leq n$,
  \item [$(b)$] $A$ is symmetric, that is, if $x\in A$, then $-x\in A$ and
  \item [$(c)$] for every subset $B$ of $A$ with $|B|=n+1$  there exist distinct $x, y\in B$  such that $x-y\in A$.
\end{itemize}
\end{theorem}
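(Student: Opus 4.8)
The plan is to recast the statement in graph-theoretic language and then argue by induction on $n$. To $A$ I associate its \emph{difference graph} $G$, whose vertex set is $A$ and in which $x$ and $y$ are joined precisely when $x-y\in A$; this is a genuine undirected graph because $A$ is symmetric, so $x-y\in A$ iff $y-x\in A$. Condition $(c)$ then says exactly that $G$ has no independent set of size $n+1$, i.e.\ that its independence number satisfies $\alpha(G)\le n$. Hence the theorem is equivalent to the assertion that every symmetric $A\subseteq C_n$ containing $e_1,\dots,e_n$ admits an independent set of size $n+1$, and it is this reformulation I would establish. The base case $n=1$ is immediate: $A\supseteq\{e_1,-e_1\}$ and $e_1-(-e_1)=2e_1\notin C_1\supseteq A$, so $\{e_1,-e_1\}$ is the required independent set.

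For the inductive step I would slice $A$ along the last coordinate, writing $A^0,A^+,A^-$ for the sets of $x\in A$ with $x_n=0,1,-1$ respectively; note that $A^-=-A^+$ and $e_n\in A^+$. The projection $\widetilde A:=\pi(A^0)\subseteq C_{n-1}$ forgetting the last coordinate is symmetric and contains $e_1,\dots,e_{n-1}$, and for $x,y\in A^0$ one has $x-y\in A$ iff $\pi(x)-\pi(y)\in\widetilde A$; thus the induction hypothesis produces an independent set $B'\subseteq A^0$ of size $n$. It remains to adjoin a single vertex drawn from $A^+\cup A^-$, and here the natural candidate is $e_n$ together with its ``descendants''. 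If $e_n$ is non-adjacent to all of $B'$ we are done; otherwise $e_n-b_1\in A$ for some $b_1\in B'$, and since this vector has last coordinate $1$ it lies in $A^+$, so one passes to $c_1:=e_n-b_1$ and repeats. In general I would track points of the form $c=e_n-\sum_{b\in S}b$ with $S\subseteq B'$, each lying in $A^+$, and choose such a $c$ with $|S|$ maximal: this maximality forces $c$ to have no neighbour in $B'\setminus S$, since any such neighbour would enlarge $S$. Thus the only remaining obstruction to extending $B'$ by $c$ is adjacency of $c$ to elements of $S$ itself.

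The main obstacle is precisely this last point: controlling the neighbours of the extra vertex \emph{inside} the already-used set $S$. If the vectors of $B'$ had pairwise disjoint supports, then for $b'\in S$ the term $-2b'$ appearing in $\pi(c)-\pi(b')$ would carry a coordinate $\pm2$, placing $c-b'$ outside $C_{n-1}$ and hence outside $A$; but $n$ nonzero vectors cannot have disjoint supports in $n-1$ coordinates, so the supports necessarily overlap and this clean cancellation fails. The degenerate configuration $A=C_2\setminus\{0\}$ already exhibits the subtlety: there the only independent set of $A^0$ is the axis pair $\{e_1,-e_1\}$, which \emph{no} point off the axis extends, and yet a valid triple exists among the corner vectors $\{(1,1),(-1,1),(1,-1)\}$, whose pairwise differences all carry a $\pm2$ coordinate. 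I would therefore resolve the step by a two-regime analysis. When $A^0$ is rich, the extremal/telescoping construction above, refined by using the global symmetry $A=-A$ to draw the extra vertex from whichever of $A^+,A^-$ is favourable, yields the extension; when $A^0$ is degenerate the slice $A^+$ must be correspondingly large, and one instead extracts the independent set of size $n+1$ directly from mutually clashing corner-type vectors of $A^+\cup A^-$, whose differences automatically leave $C_n$. Interpolating cleanly between these two mechanisms is the crux of the argument.
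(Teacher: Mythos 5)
Your reduction to independence numbers and your base case are fine, but the inductive step has a genuine gap, and your own example already refutes the step as you have set it up. Taking $A=C_{2}\setminus\set{0}$, the zero--slice $A^{0}=\set{(1,0),(-1,0)}$ forces $B'=\set{(1,0),(-1,0)}$, and \emph{no} vertex of $A$ whatsoever extends $B'$ to an independent triple (every point of $A^{+}\cup A^{-}$ differs from $(1,0)$ or from $(-1,0)$ by a vector of $A$). So the strategy ``fix the inductively obtained $B'\subseteq A^{0}$ and adjoin one vertex'' is not merely hard to complete: it is false as stated, and any repair must be permitted to alter or discard $B'$. The telescoping device $c=e_{n}-\sum_{b\in S}b$ does legitimately produce elements of $A^{+}$ with no neighbours in $B'\setminus S$, but, as you concede, it says nothing about neighbours inside $S$, and in the example above the telescoping returns you to $e_{n}$ itself, which is adjacent to everything in $S$. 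The proposed ``two-regime'' interpolation between a rich $A^{0}$ and a degenerate one is exactly the entire content of the theorem and is left unproved; as written the argument establishes only the case $n=1$.

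The paper avoids this dead end by inducting on the projection $pr_{n-1}A$ of \emph{all} of $A$ rather than on the zero--slice. The inductive hypothesis then yields $B=\set{x_{1},\dots,x_{n}}$ with the \emph{stronger} property $x_{i}-x_{j}\notin pr_{n-1}A$, which by Fact \ref{fact1.4} guarantees that \emph{arbitrary} lifts $(x_{i},\xi_{i})\in A$ remain pairwise non-adjacent; the whole game is then to choose the signs $\xi_{i}\in\set{0,\pm1}$ well and to adjoin $e_{n}=(0,1)$. Proposition \ref{prop2.5} shows that if every such system of lifts fails, then by successively re-choosing the $\xi_{i}$ one forces some $x_{i}$ to have all three lifts $(x_{i},0),(x_{i},\pm1)$ in $A$, whereupon $(x_{i},1)-(x_{i},-1)=(0,2)\notin C_{n}$ gives the contradiction. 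This freedom to re-select lifts is precisely what your slicing forfeits: in $A^{0}$ the last coordinate is frozen at $0$, so the pigeonhole on the three extensions is unavailable. (The paper in fact proves the sharper bound $K(l)\le l-1$, i.e.\ the conclusion already holds in $C_{l-2}$, together with a matching lower-bound construction; your formulation targets only the weaker Theorem \ref{th1.2}.) If you want to salvage your outline, the minimal change is to replace ``independent set in $\pi(A^{0})$'' by ``independent set in $pr_{n-1}A$'' and then supply the lift-selection argument of Proposition \ref{prop2.5}.
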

So if $n\in\mathbb{N}$ and $A$ is any subset of $C_{n}$ satisfying conditions $(a)$ and $(b)$, then there exists $B\subseteq A$ with $|B|=n+1$
which is difference-free with respect to $A$.\\ \\
Subsequently using Theorem \ref{th1.2} we can prove in a similar way as Kottman proved his main result, the following separation result for finite dimensional Banach spaces.
\begin{theorem}\label{th1.3}
Let $X$ be a finite dimensional Banach space with $\dim X=n$. Then for any Auerbach basis $\set{(x_{k},x_{k}^{*}): 1\leq k\leq n}$ of $X$, there exist $n+1$ linear combinations $z_{1}, \ldots, z_{n+1}$ of the vectors $x_{1}, \ldots, x_{n}$ with coordinates $0, \pm 1$ such that $\norm{z_{k}}=1$, for $1\leq k\leq n+1$ and $\norm{z_{k}-z_{l}}>1$, for $1\leq k<l\leq n+1$.
\end{theorem}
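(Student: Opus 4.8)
The plan is to deduce Theorem~\ref{th1.3} from the combinatorial Theorem~\ref{th1.2} by mimicking Kottman's passage from his combinatorial lemma to the geometric separation statement. Given the Auerbach basis $\set{(x_k,x_k^*):1\le k\le n}$, I would identify each sign vector $\eps=(\eps_1,\dots,\eps_n)\in C_n=\set{0,\pm1}^n$ with the linear combination $v(\eps)=\sum_{k=1}^n \eps_k x_k$. The Auerbach property gives two one-sided estimates that will do all the work: since the $x_k^*$ have norm one and are biorthogonal to the $x_k$, we have $\abs{\eps_j}=\abs{x_j^*(v(\eps))}\le\norm{v(\eps)}$ for each $j$, so any nonzero $v(\eps)$ has norm at least $1$; and since each $\norm{x_k}=1$, the triangle inequality is available as an upper bound when needed.

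The key move is to set up the set $A\subseteq C_n$ against which Theorem~\ref{th1.2} will be invoked. I would let
\[
A=\set{\eps\in C_n:\eps\neq 0\text{ and }\norm{v(\eps)}=1},
\]
that is, the sign vectors whose associated combination lies exactly on the unit sphere $S_X$. First I would check the hypotheses of Theorem~\ref{th1.2}. Condition~$(a)$ holds because $v(e_k)=x_k$ and $\norm{x_k}=1$, so each $e_k\in A$. Condition~$(b)$ holds because $\norm{v(-\eps)}=\norm{-v(\eps)}=\norm{v(\eps)}$, so $A$ is symmetric. Theorem~\ref{th1.2} therefore asserts that $A$ fails condition~$(c)$: there exists a subset $B\subseteq A$ with $\abs{B}=n+1$ such that for all distinct $\eps,\delta\in B$ we have $\eps-\delta\notin A$.

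From this difference-free set $B$ I would extract the desired vectors. Writing $B=\set{\eps^{(1)},\dots,\eps^{(n+1)}}$ and $z_k=v(\eps^{(k)})$, each $z_k$ is by construction a $0,\pm1$ combination of the $x_j$ with $\norm{z_k}=1$. It remains to show $\norm{z_k-z_l}>1$ for $k\neq l$. Now $z_k-z_l=v(\eps^{(k)}-\eps^{(l)})$, but the difference $\eps^{(k)}-\eps^{(l)}$ need not itself lie in $C_n$, since a coordinate can equal $\pm2$. Here I split into cases on the entries of the difference vector $w=\eps^{(k)}-\eps^{(l)}$. If some coordinate $w_j$ has $\abs{w_j}=2$, then $\norm{z_k-z_l}\ge\abs{x_j^*(z_k-z_l)}=\abs{w_j}=2>1$ and we are done immediately. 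Otherwise every $w_j\in\set{0,\pm1}$, so $w\in C_n$; since $z_k\neq z_l$ (as the $\eps^{(k)}$ are distinct and the $x_k$ are linearly independent) we have $w\neq 0$, and the difference-free property of $B$ gives $w\notin A$, which by the definition of $A$ means $\norm{v(w)}\neq 1$; combined with the Auerbach lower bound $\norm{v(w)}\ge 1$ for nonzero $w$, this forces $\norm{z_k-z_l}=\norm{v(w)}>1$.

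The only genuinely delicate point is this case analysis on the coordinates of the difference, and in particular making sure the strict inequality is never lost: the lower bound $\norm{v(w)}\ge1$ is only a $\ge$, so the strictness must come either from a coordinate of modulus $2$ or from the exact exclusion $w\notin A$ encoding $\norm{v(w)}\neq1$. I expect the main obstacle to be verifying that these two sources of strictness exhaust all cases cleanly, and confirming that the definition of $A$ as the sign vectors landing exactly on $S_X$ is the right one to make both the hypothesis check and the conclusion go through simultaneously.
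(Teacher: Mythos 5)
Your proof is correct and follows essentially the same route as the paper: the set $A$ you define is exactly the image $T(E_{n})$ of the norm-one $0,\pm1$ combinations under the coordinate map $x\mapsto(x_{k}^{*}(x))$ used in the paper, and the two sources of strictness you isolate (a coordinate of modulus $2$, or exclusion from $A$ combined with the Auerbach lower bound $\norm{v(w)}\ge 1$) are precisely the content of the paper's observation that $\norm{x-y}\le 1$ forces $x-y\in E_{n}$. No gaps; you have simply written out the details the paper delegates to its proof of Theorem \ref{th2.2}.
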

In the final part of section 1, we investigate the relationship between Kottman's Lemma and its finite version, which is Theorem \ref{th1.2}. (See Prop. \ref{prop2.9} and the proof of Kottman's Lemma by Prop. \ref{prop2.5}.) \\ \\
In the second section  we are concerned with some other versions of Theorem \ref{th1.2}, that is, its dual version concerning sum-free sets (see Theorem \ref{th1.4} below) and a complex version of it (Theorem \ref{th3.9}). These results have consequences similar to Theorem \ref{th1.3} (see Theorems \ref{th3.3} and \ref{th3.10}).
\begin{theorem}\label{th1.4}
For any $n\in\mathbb{N}$ and each $A\subseteq C_{n}$ satisfying conditions:
\begin{itemize}
  \item [$(a)$] $e_{i}\in A$, for $i=1, \ldots, n$ and
  \item [$(b)$] $A$ is symmetric.
\end{itemize}
There exist a subset $B\subseteq A$ such that $|B|=n$ and $x+y\notin A$ for every $x, y\in B$. (That is, $B$ is sum-free with respect to $A$.)
\end{theorem}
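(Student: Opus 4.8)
The plan is to reformulate the statement graph-theoretically and then combine an easy ``collision'' construction with the difference-free structure supplied by Theorem \ref{th1.2}. Consider the \emph{sum graph} $G$ on the vertex set $A\setminus\set{0}$, in which distinct $x,y$ are joined iff $x+y\in A$; a set $B$ is sum-free with respect to $A$ exactly when it is an independent set of $G$ consisting of nonzero vectors (the case $x=y$ is automatic, since $2x\notin C_{n}$ for $x\neq 0$). Thus the goal is to produce an independent set of size $n$. I would first record the symmetry $x\mapsto -x$, which is an automorphism of $G$ because $A=-A$, and note its formal resemblance to the difference graph $H$ given by $x\sim_{H}y\iff x-y\in A$: the two relations are interchanged by negating one endpoint, and Theorem \ref{th1.2} says precisely that $H$ has an independent set of size $n+1$ (such a set cannot contain $0$, since $x-0=x\in A$). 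The target $n=(n+1)-1$ already suggests that one vertex is lost in passing from differences to sums, and the example $A=\set{0,\pm e_{1},\ldots,\pm e_{n}}$, for which $\set{e_{1},\ldots,e_{n}}$ is sum-free but no antipodal pair may be used (as $e_{i}+(-e_{i})=0\in A$), shows that $n$ is best possible.

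First I would dispose of the ``thick'' case by a collision argument. If some coordinate $i$ satisfies $\abs{\set{x\in A:x_{i}=1}}\geq n$, then $B=\set{x\in A:x_{i}=1}$ already works: for distinct $x,y\in B$ the $i$-th coordinate of $x+y$ equals $2$, so $x+y\notin C_{n}\supseteq A$. Hence I may assume that every coordinate carries at most $n-1$ entries equal to $+1$ (equivalently, by symmetry, at most $n-1$ entries equal to $-1$), so that $A$ is thin in each direction and no single collision yields enough vectors.

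In the remaining case the plan is an induction on $n$, splitting on the last coordinate. The section $A_{0}=\set{x\in A:x_{n}=0}$ is, after deleting the $n$-th coordinate, a symmetric subset of $C_{n-1}$ containing $e_{1},\ldots,e_{n-1}$, so the inductive hypothesis yields a sum-free $B_{0}\subseteq A_{0}$ with $\abs{B_{0}}=n-1$ of nonzero vectors; since sums of elements of $B_{0}$ have vanishing last coordinate, $B_{0}$ remains sum-free with respect to all of $A$. It then suffices to adjoin a single $w\in A$ with $w_{n}\neq 0$ such that $w+b\notin A$ for every $b\in B_{0}$.

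Producing this bridging vector $w$ is the crux, and I expect it to be the main obstacle. The difficulty is that $w+b$ always has nonzero last coordinate, so it may lie inside $C_{n}$ with no collision to push it out of $A$; the choice of $w$ must instead exploit the global difference-free structure. Concretely, I would apply Theorem \ref{th1.2} (or the involution relating $G$ and $H$) to the vectors $\set{b+e_{n}:b\in B_{0}}\cup\set{e_{n}}$, all sharing the last coordinate $+1$, and use difference-freeness among them together with the symmetry $A=-A$ to locate a value of $w$ forcing every sum $w+b$ outside $A$. Balancing the thin hypothesis of the second step (few admissible vectors with $w_{n}=1$) against the constraints (one per element of $B_{0}$) is exactly where the count $n+1$ of Theorem \ref{th1.2} is spent down to $n$; making this matching precise, rather than the two preliminary reductions, is where the real work lies.
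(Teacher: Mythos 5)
There is a genuine gap: the step you yourself flag as the crux --- producing a single $w\in A$ with $w_{n}\neq 0$ and $w+b\notin A$ for every $b\in B_{0}$ --- carries essentially the whole content of the theorem, and the route you sketch for it does not work as stated. Theorem \ref{th1.2} produces \emph{some} difference-free subset of $A$ of size $n+1$; it cannot be ``applied to'' the prescribed vectors $\set{b+e_{n}: b\in B_{0}}\cup\set{e_{n}}$, which need not even lie in $A$, and no mechanism is indicated that would convert difference-freeness of some set into the assertion that the $n-1$ specific sums $w+b$ all avoid $A$. Moreover, the difficulty is largely created by your own set-up: by taking $B_{0}$ inside the \emph{section} $\set{x\in A: x_{n}=0}$ you freeze the last coordinate of all $n-1$ vectors at $0$, so the natural candidate $w=e_{n}$ works only when $b+e_{n}\notin A$ for every $b\in B_{0}$, which can certainly fail; after that no canonical candidate remains. (The preliminary ``thick'' reduction is correct but is never used in the sequel, so it does not help here.)

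The paper's proof (Theorem \ref{th3.2}(1)) runs the same one-coordinate-at-a-time induction, but through the \emph{projection} $pr_{N}A$ rather than the zero-section, and this is exactly what removes the obstacle. One first finds a sum-free $B=\set{x_{1},\ldots,x_{l}}$ of size $l$ in $pr_{N}A$; since $x_{i}+x_{j}\notin pr_{N}A$, \emph{any} extensions satisfy $(x_{i},\xi_{i})+(x_{j},\xi_{j})\notin A$, so one is free to choose each $\xi_{i}$ to be the largest value in $\set{1,0,-1}$ for which $(x_{i},\xi_{i})\in A$. Adjoining $(0,1)=e_{N+1}$ is then automatically safe: if $\xi_{i}=1$ the sum has last coordinate $2$, and if $\xi_{i}\in\set{0,-1}$ then $(x_{i},\xi_{i}+1)\notin A$ by the maximality of $\xi_{i}$. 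Your version forfeits precisely this freedom to move the inherited vectors off the zero-section, which is why your bridging step is hard and remains unproved. To repair the argument, strengthen the inductive statement to the projection form (equivalently, prove $N\xrightarrow[s]{}l\Rightarrow N+1\xrightarrow[s]{}l+1$ and note $S(2)=2$); your base observations and the optimality example $\set{0,\pm e_{1},\ldots,\pm e_{n}}$ are fine and match the paper's proof that $S(l)>l-1$.
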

Regarding Theorem \ref{th1.3} we mention here that in \cite{Arias} it was shown that, in every finite dimensional Banach space ``most'' of the pairs of points in the unit ball (endowed with normalized Lebesgue measure) are separated by more than $\sqrt{2}(1-\varepsilon)$. In particular we may find an exponentially  large number of points on the unit sphere any two of which are separated so. This fact was proved by Bourgain using another method. We also remark that Elton and Odell \cite{Elton}, improving the result of Kottman (Theorem \ref{th2.2} below) have proved that the unit sphere of every infinite dimensional Banach space contains a $(1+\varepsilon)$-separated sequence.
\section{A finite version of Kottman's Lemma}

\hspace*{0.5cm} We start by proving Kottman's separation theorem using his Lemma. We first remind the following definition.
\begin{defn}
Let $X$ be a Banach space. A system of pairs $\set{(x_{\gamma},x_{\gamma}^{*}): \gamma\in\Gamma}\subseteq X\times X^{*}$ is said to be an Auerbach system if
\begin{itemize}
  \item [$(a)$] it is biorthogonal, i.e. $x_{\alpha}^{*}(x_{\beta})=\delta_{\alpha\beta}$, for $\alpha, \beta\in\Gamma$ and
  \item [$(b)$] $\norm{x_{\gamma}}=\norm{x_{\gamma}^{*}}=1$, for $\gamma\in\Gamma$.
\end{itemize}
\end{defn}
It is a classical fact that every finite dimensional Banach space $X$ has an Auerbach basis, that is, an Auerbach system $\set{(x_{k},x_{k}^{*}): 1\leq k\leq n}$ where $n=\dim X$. By a result of Day, in any infinite dimensional Banach space $X$ we can always find an Auerbach system  $\set{(x_{k},x_{k}^{*}): k\in\mathbb{N}}$ \cite{Hajek}.
\begin{thm}\emph{(Kottman)}\label{th2.2}
Let $X$ be a Banach space with $\dim X=\infty$ and $\set{(x_{k},x_{k}^{*}): k\in\mathbb{N}}$ an Auerbach system in $X$. Let also $E=\set{x=\sum_{k=1}^{n}a_{k}x_{k}: \right.$ $\left. n\in \mathbb{N}, \norm x=1 \,\,\,\mathrm{and}\,\,\, a_{k}\in\set{0,\pm 1}}$. Then there is an infinite subset $\Delta$ of $E$ such that $\norm{x-y}>1$ for every $x, y\in\Delta$ with $x\neq y$.
\end{thm}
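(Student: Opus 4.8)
The plan is to derive the theorem from Kottman's Lemma by transporting the problem into $\mathcal{U}$ through the coordinate functionals $x_k^*$. First I would record the elementary observation that every nonzero vector $u=\sum_k a_k x_k$ with $a_k\in\set{0,\pm 1}$ satisfies $\norm{u}\geq 1$: choosing an index $k$ with $a_k\neq 0$ and using $\norm{x_k^*}=1$ together with biorthogonality gives $\norm{u}\geq\abs{x_k^*(u)}=\abs{a_k}=1$. Thus, among vectors of this shape, having norm $\leq 1$ is the same as having norm $=1$ (for nonzero vectors). I would then set $A=\set{a\in\mathcal{U}:\norm{\sum_k a_k x_k}\leq 1}$. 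Condition $(a)$ of the Lemma holds since $\norm{x_k}=1$, so $e_k\in A$, and condition $(b)$ holds because $\norm{\cdot}$ is unchanged under $a\mapsto -a$. By the reformulated Lemma there is therefore an infinite $B\subseteq A$ that is difference-free with respect to $A$, i.e.\ $a-b\notin A$ whenever $a,b\in B$ with $a\neq b$.

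Discarding the zero sequence from $B$ if present (which leaves $B$ infinite), I would put $\Delta=\set{\sum_k a_k x_k: a\in B}$. Each of these vectors lies in $E$: it has norm $\leq 1$ because $a\in A$, and norm $\geq 1$ by the observation above, hence norm exactly $1$. Moreover the assignment $a\mapsto\sum_k a_k x_k$ is injective, since $x_k^*$ recovers the $k$-th coordinate, so $\Delta$ is infinite.

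It remains to verify strict separation. Fix distinct $a,b\in B$ with images $u,v$. The difference $a-b$ has entries in $\set{0,\pm 1,\pm 2}$, so there are two cases. If some entry equals $\pm 2$, say the $k$-th, then $\norm{u-v}\geq\abs{x_k^*(u-v)}=\abs{a_k-b_k}=2>1$. Otherwise $a-b\in\mathcal{U}$, and difference-freeness gives $a-b\notin A$, which by the definition of $A$ means precisely $\norm{u-v}=\norm{\sum_k(a_k-b_k)x_k}>1$. In both cases $\norm{u-v}>1$, so $\Delta$ is the desired set.

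The only genuinely non-formal point — and the step I would treat most carefully — is the passage through entries $\pm 2$. Kottman's Lemma only controls differences that remain inside $\mathcal{U}$, so one must separately observe that an entry jumping to $\pm 2$ already forces separation (indeed by $2$); here the Auerbach normalization $\norm{x_k^*}=1$ is exactly what converts control on coordinates into the uniform strict inequality $\norm{u-v}>1$.
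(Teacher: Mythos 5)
Your proof is correct and follows essentially the same route as the paper: both transport the problem into $\mathcal{U}$ via the coordinate functionals, apply the reformulated Kottman Lemma to the set of coordinate vectors of $\set{0,\pm 1}$-combinations of norm at most $1$, and use $\norm{x_{k}^{*}}=1$ to convert coordinate information back into norm separation. The only cosmetic differences are that you allow $0$ in $A$ (harmless, since differences of distinct elements are nonzero) and that you isolate the $\pm 2$-coordinate case explicitly, whereas the paper absorbs it into the observation that $x,y\in E$ with $\norm{x-y}\leq 1$ forces $x-y\in E$.
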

\begin{proof}
Let $Y$ be the linear span of the set $\set{x_{k}: k\in\mathbb{N}}$, we define the operator $T: Y\to (c_{00},\norm{\cdot}_{\infty})$ by $T(x)=(x_{k}^{*}(x))$, for $x\in Y$. It is easy to check that $T$ is a linear one to one operator such that $T(E)\subseteq \mathcal{U}$ and $\norm{T}=1$. So we have that
\[
\norm{T(x)-T(y)}_{\infty}\leq\norm{x-y}, \,\,\mathrm{for}\,\,\, x, y \in Y.
\]
Through the above inequality we observe that if $x,y \in E$ with $x\neq y$  and $\norm{x-y}\leq1$, then $x-y\in E$. We put $A=T(E)$ then $A$ satisfies conditions $(a)$ and $(b)$ of Kottman's Lemma, therefore, by its restatement, there exists an infinite subset $B$ of $A$ such that, if $\Delta=T^{-1}(B)$, then we have: $x, y\in\Delta$ and $x\neq y$, then $T(x)-T(y)\notin A$, which clearly implies that $\norm{x-y}>1$.
\end{proof}
Since the proof of Theorem \ref{th1.3} follows the lines of the proof of Theorem \ref{th2.2}, we give a brief description of it.
\\ \\
\textit{Proof of Theorem \ref{th1.3}.}
Let $\set{(x_{k},x_{k}^{*}): 1\leq k\leq N}$ be any Auerbach basis of $X$. We set $E_{N}=\set{x=\sum_{k=1}^{N}a_{k}x_{k}:  \norm x=1 \,\,\,\mathrm{and}\,\,\, a_{k}\in\set{0,\pm 1}}$ and consider the invertible linear operator $T: X\to l_{\infty}^{N}$, defined by $T(x)=(x_{k}^{*}(x))$, for $x\in X$. We observe that $T(E_{N})\subseteq C_{N}$ and simply apply Theorem \ref{th1.2} to get the conclusion.
\\

In order to prove Theorem \ref{th1.2}, we are going to use the following.
\begin{notat}\label{not1.3}
\textbf{(1)} Let $l\geq2$, for $N\geq 2$ we write $N\to l$ if there is no subset $A$ of $C_{N}$ such that:
\begin{itemize}
  \item [$(a)$] $e_{k}\in A$, for $k=1, \ldots, N$
  \item [$(b)$] $A$ is symmetric and
  \item [$(c)$] for every subset $B$ of $A$ with $|B|=l$  there exist $x\neq y\in B$  such that $x-y\in A$.
\end{itemize}
\textbf{(2)} The Kottman function $K(l)$, for $l\geq2$ denotes the minimal $N\in\mathbb{N}$ such that $N\to l$. It is easy to see that $1=K(1)\leq K(l)\leq K(l+1)$, for $l\in\mathbb{N}$.\\
\textbf{(3)} Let $A\subseteq C_{m}$, for $n\leq m$ we denote by $pr_{n}A$ the projection of $A$ at the first n-coordinates. For $x=(x_{1}, \ldots, x_{n})\in pr_{n}A$ any vector of the form $(x,\xi)=(x_{1}, \ldots, x_{n}, \xi)\in A$ will be called an extension of $x$ in $A$. The notation $pr_{n}A$ for any subset $A$ of $\mathcal{U}$ has a similar meaning.
\end{notat}
\begin{fact}\label{fact1.4}
Let $A\subseteq C_{N+1}$ and $x\neq y\in pr_{N}A$ such that $x-y \notin pr_{N}A$. Then for any extensions $(x, \xi), (y, \zeta)$ of $x$ and $y$ in $A$, $(x, \xi)-(y,\zeta)\notin A$.
\end{fact}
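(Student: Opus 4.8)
The plan is to argue by contradiction and simply unwind the definitions, since the entire content of the statement is that the projection $pr_{N}$ respects differences. First I would suppose, toward a contradiction, that $(x,\xi)-(y,\zeta)\in A$. Computing coordinatewise, $(x,\xi)-(y,\zeta)=(x-y,\,\xi-\zeta)$, where $x-y=(x_{1}-y_{1},\ldots,x_{N}-y_{N})$ is the difference of the first $N$ coordinates and $\xi-\zeta$ is the last coordinate. Thus the vector we have assumed to lie in $A$ has its first-$N$-coordinate block equal to exactly $x-y$.

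Next I would apply the projection. By the definition of $pr_{N}A$ as the image of $A$ under the map that forgets the last coordinate, every element of $A$ projects into $pr_{N}A$; hence $x-y=pr_{N}\big((x-y,\xi-\zeta)\big)\in pr_{N}A$. This directly contradicts the hypothesis $x-y\notin pr_{N}A$, so $(x,\xi)-(y,\zeta)\notin A$, as claimed. Note that the assumptions $x\neq y$ and that $(x,\xi),(y,\zeta)$ are genuine extensions play no role in this implication; they are recorded only because this is the form in which the fact is invoked inside the inductive proof of Theorem \ref{th1.2}.

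There is essentially no obstacle here: the statement is a purely combinatorial bookkeeping observation whose only subtlety is keeping the coordinates straight, namely that subtraction in $C_{N+1}$ is performed entrywise and therefore commutes with the coordinate-forgetting projection $pr_{N}$. The point of isolating it as a fact is that it guarantees that a difference which is \emph{already absent} from the projected set $pr_{N}A$ cannot be reintroduced by any choice of extensions into $A$; this is precisely what one needs when lifting a difference-free configuration from dimension $N$ up to dimension $N+1$.
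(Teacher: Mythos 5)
Your proof is correct and is essentially identical to the paper's: both assume $(x,\xi)-(y,\zeta)=(x-y,\xi-\zeta)\in A$ and immediately conclude $x-y\in pr_{N}A$, contradicting the hypothesis. The extra remarks about which hypotheses are actually used are accurate but not part of the paper's (one-line) argument.
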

\begin{proof}
Assuming the contrary we would have that $(x-y,\xi-\zeta)\in A$ and consequently that $x-y\in pr_{N}A$, a contradiction.
\end{proof}
\begin{prop}\label{prop2.5}
Let $N, l\in\mathbb{N}$ and suppose that $N\to l$. Let also $A$ be a subset of $C_{N+1}$ satisfying (a) and (b) and $B=\set{x_{1}, \ldots, x_{l}}$ a subset of $pr_{N}A$ such that $x_{i}-x_{j}\notin pr_{N}A$, for $1\leq i<j\leq l$. Then there exist extensions $(x_{1}, \xi_{1}), \ldots, (x_{l},\xi_{l})$ of the elements of $B$ in $A$ and $z\in A$ such that $x-y\notin A$, for every $x\neq y \in B'=\set{(x_{1}, \xi_{1}), \ldots, (x_{l}, \xi_{l}),z}$.
\end{prop}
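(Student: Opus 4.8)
The plan is to retain the given difference-free set $B=\{x_1,\dots,x_l\}\subseteq pr_N A$ and lift it to a difference-free set $B'$ of size $l+1$ inside $A$, by adjoining a single, carefully chosen element $z$ that exploits the extra $(N+1)$-st coordinate. (The hypothesis $N\to l$ is precisely what produces such a $B$ when applied to $pr_N A$, which inherits conditions (a) and (b) in $C_N$; here $B$ is already handed to us, so the work is entirely in the lifting.)

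First I would note that the pairwise differences among the extensions cost nothing. For \emph{any} choice of extensions $(x_i,\xi_i)\in A$ of the $x_i$, Fact \ref{fact1.4} applied to each pair $i\neq j$ (recall $x_i\neq x_j$ and $x_i-x_j\notin pr_N A$) gives $(x_i,\xi_i)-(x_j,\xi_j)\notin A$. So no matter how I select the extensions, the first $l$ members of $B'$ are automatically difference-free, and the only genuine task is to adjoin a vector $z\in A$ separated from all of them. Here I would take $z=e_{N+1}$, which lies in $A$ by (a).

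The crux is then to choose the extensions so that $z-(x_i,\xi_i)=(-x_i,\,1-\xi_i)\notin A$ for every $i$. I would let $\xi_i$ be the \emph{least} available last coordinate, $\xi_i=\min\{\xi:(x_i,\xi)\in A\}$, and argue by cases. If $\xi_i=-1$, then $1-\xi_i=2\notin\{0,\pm1\}$, so $(-x_i,2)\notin C_{N+1}\supseteq A$. If $\xi_i=0$, minimality forces $(x_i,-1)\notin A$, whence by symmetry $(-x_i,1)\notin A$, which is exactly $z-(x_i,\xi_i)$. If $\xi_i=1$, minimality forces $(x_i,0)\notin A$, whence by symmetry $(-x_i,0)\notin A=z-(x_i,\xi_i)$. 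Thus $z-(x_i,\xi_i)\notin A$ in every case, and by symmetry of $A$ also $(x_i,\xi_i)-z\notin A$. I expect this case analysis, matching the minimal-extension rule to the symmetry of $A$ and to the out-of-cube phenomenon, to be the main (indeed the only) obstacle.

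Finally I would verify that $B'$ really has $l+1$ distinct elements. The vectors $(x_i,\xi_i)$ are pairwise distinct because their first coordinates $x_i$ are, and $z=e_{N+1}=(0,1)$ can equal some $(x_i,\xi_i)$ only if $x_i=0$ and $\xi_i=1$; but if $x_i=0$ then $(0,-1)=-e_{N+1}\in A$ by symmetry, forcing $\xi_i=-1\neq1$. Hence $z\neq(x_i,\xi_i)$ for all $i$, so $|B'|=l+1$. Combining this with the separation obtained from Fact \ref{fact1.4} and from the case analysis shows $x-y\notin A$ for all distinct $x,y\in B'$, which is the desired conclusion.
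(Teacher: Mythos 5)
Your proof is correct, but it is a genuinely different and more direct argument than the paper's. The paper argues by contradiction: assuming every admissible $B'$ contains a pair whose difference lies in $A$, it runs two successive claims (always adjoining $z=(0,1)$ and repeatedly re-choosing the last coordinates) to produce an $x\in B$ all three of whose extensions lie in $A$, and then gets the contradiction $(0,2)\in A$ from the set $\set{(x_{1},1),(x_{1},-1),(x_{2},\xi_{2}),\ldots}$. You instead exhibit the witnesses explicitly: $z=e_{N+1}$ and the \emph{minimal} extension $\xi_{i}=\min\set{\xi:(x_{i},\xi)\in A}$. Your three-case check is sound: Fact \ref{fact1.4} kills all differences between extensions of distinct $x_{i}$; for the differences with $z$, minimality plus the symmetry of $A$ rules out $(-x_{i},1-\xi_{i})\in A$ when $\xi_{i}\in\set{0,1}$, and $1-\xi_{i}=2$ falls outside the cube when $\xi_{i}=-1$; symmetry of $A$ handles the reversed differences; and your distinctness check for $z$ versus the $(x_{i},\xi_{i})$ (which matters because Corollary \ref{cor2.6}(a) needs $|B'|=l+1$) is also right. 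What your route buys is brevity and constructiveness -- one sees exactly which $l+1$ points work -- whereas the paper's contradiction argument produces a $z$ that is a second extension of one of the $x_{i}$ rather than the new basis vector; for all the applications in the paper (Corollary \ref{cor2.6}, Proposition \ref{prop2.10}) either form of the conclusion suffices, and the dual "maximal extension" version of your trick would equally streamline the Claim in Theorem \ref{th3.2}.
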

\begin{proof}
Suppose that for any extensions of the elements of $B$ in $A$ and for any $z\in A$ for the corresponding set $B'$ we have that there exist $x\neq y\in B'$ such that $x-y\in A$ (1).
\\ \\
\underline{\textit{Claim 1.}}
\textit{There exists $x\in B$ such that $(x,1)\in A$.}
\\ \\
\textit{Proof of Claim 1.}
We assume that $(x,1)\notin A$, for every $x\in B$. For $1\leq i\leq l$ we consider an extension $(x_{i},\xi_{i})$ of $x_{i}$ in $A$ and put $B_{1}=\set{(x_{1},\xi_{1}), \ldots, (x_{l},\xi_{l}), (0,1)}$ (of course $\xi_{i}$ must belong to $\set{0,-1}$ for $1\leq i\leq l$). By (1) we take that there exist $x\neq y\in B_{1}$ such that $x-y \in A$ and from Fact \ref{fact1.4} that means that there exists $1\leq i_{0}\leq l$ such that $(x_{i_{0}}, \xi_{i_{0}})-(0,1)\in A$. Since $\xi_{i}\in\set{0,-1}$, for $1\leq i\leq l$ it is immediate that $\xi_{i_{0}}$ must be 0. Now put
\begin{eqnarray*}
\set{x_{1}, \ldots, x_{k}} & = & \set{x\in B: (x,0), (x,-1)\in A} \\
\set{x_{k+1}, \ldots, x_{m}} & = & \set{x\in B: (x,0)\in A, (x,-1)\notin A}\\
\set{x_{m+1}, \ldots, x_{l}} & = & \set{x\in B: (x,0)\notin A, (x,-1)\in A}
\end{eqnarray*}
and
\[
\zeta_{i}=-1,\,\,\,  \textrm{for}\,\,\, 1\leq i\leq k \,\,\,\textrm{or}\,\,\, m+1\leq i\leq l\,\,\, \textrm{and}\,\,\, \zeta_{i}=0, \,\,\,\textrm{for}\,\,\, k+1\leq i\leq m.
\]
Then $(x_{i},\zeta_{i})\in A$, for $1\leq i\leq l$. Considering the set $B_{2}=\set{(x_{1},\zeta_{1}), \ldots, (x_{l}, \zeta_{l}), (0,1)}$ we conclude, as before, that there exists $1\leq i\leq l$ such that $(x_{i}, \zeta_{i}-1)\in A$, which is a contradiction due to the choice of $\zeta_{i}$.
\\ \\
\underline{\textit{Claim 2}}
\textit{There exists $x\in B$ such that $(x,\xi)\in A$ for $\xi\in\set{0,\pm 1}$}.
\\ \\
\textit{Proof of Claim 2}.
From Claim 1 we have that $\set{x\in B: (x,1)\in A}\neq\emptyset$. Then there exist $1\leq k< m\leq l$ such that
\begin{eqnarray}\label{eq2}
\nonumber \set{x_{1}, \ldots, x_{k}} & = & \set{x\in B: (x,1)\in A} \\
\set{x_{k+1}, \ldots, x_{m}} & = & \set{x\in B: (x,1)\notin A, (x,-1)\in A}\\
\nonumber \set{x_{m+1}, \ldots, x_{l}} & = & \set{x\in B: (x,1)\notin A, (x,-1)\notin A, (x,0)\in A}.
\end{eqnarray}
Now for $1\leq i \leq l$ we put $\xi_{i}=1$, for $1\leq i\leq k$, $\xi_{i}=-1$, for $k+1\leq i\leq m$ and $\xi_{i}=0$, for $m+1\leq i\leq l$. Further we consider the subset of $A$, $B_{3}=\set{(x_{1},\xi_{1}), \ldots, (x_{l},\xi_{l}),(0,1)}$. Again by (1) we have that there exists $1\leq i\leq l$ such that $(x_{i},\xi_{i}-1)\in A$. From \eqref{eq2} and the choice of $\xi_{i}$ it follows that
\begin{equation}\label{eq3}
i\leq k.
\end{equation}
From the last we take that there exists also $1\leq n\leq k$ such that
\begin{eqnarray}\label{eq4}
\nonumber\set{x_{1}, \ldots, x_{n}} & = & \set{x\in B: (x,1), (x,0)\in A} \\
\set{x_{n+1}, \ldots, x_{k}} & = & \set{x\in B: (x,1)\in A, (x,0)\notin A} \\
\nonumber \set{x_{k+1}, \ldots, x_{m}} & = & \set{x\in B: (x,0)\in A, (x,-1)\notin A}\\
\nonumber\set{x_{m+1}, \ldots, x_{l}} & = & \set{x\in B: (x,0)\notin A, (x,-1)\in A}.
\end{eqnarray}
We put now $\zeta_{i}=0$, for $1\leq i\leq n$, $\zeta_{i}=1$, for $n+1\leq i\leq k$, $\zeta_{i}=0$, for $k+1\leq i\leq m$ and $\zeta_{i}=-1$, for $m+1\leq i\leq l$. Then the set $B_{4}=\set{(x_{1},\zeta_{1}), \ldots, (x_{l},\zeta_{l}),(0,1)}$ is a subset of $A$ and from (1) follows that there exists $1\leq i_{0}\leq l$ such that $(x_{i_{0}},\zeta_{i_{0}}-1)\in A$. Finally, \eqref{eq4} and the choice of $\zeta_{i}$ yield that $i_{0}\leq n$ and so $(x_{i_{0}},\xi)\in A$, for every $\xi\in\set{0,\pm 1}$. \\ \\
Having proved Claim 2, we may assume that $(x_{1},\xi)\in A$, for $\xi\in\set{0,\pm 1}$. We consider now extensions of the rest elements of $B$ in $A$, $(x_{2},\xi_{2}), \ldots, (x_{l},\xi_{l})$ and the set $B'=\set{(x_{1},1), (x_{1},-1), (x_{2},\xi_{2}), \ldots, (x_{l},\xi_{l})}$. Fact 2.4 and (1) yield now $(0,2)=(x_{1},1)-(x_{1},-1)\in A$, a contradiction.
\end{proof}
\begin{cor}\label{cor2.6}
Let $N, l\in\mathbb{N}$.
\begin{itemize}
\item [$(a)$] If $N\to l$, then $N+1\to l+1$.
\item [$(b)$] Suppose that the number $K(l)$ exists, then $N\to l$, for every $N\geq K(l)$.
\item [$(c)$] $K(l)$ is well defined, moreover $K(l)\leq l-1$, for $l\geq2$.
\end{itemize}
\end{cor}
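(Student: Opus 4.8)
The plan is to establish the three parts in the order (a), (b), (c), the last being a short induction that feeds on (a). The common device is the projection: if $A \subseteq C_{N+1}$ satisfies (a) and (b), then $pr_N A \subseteq C_N$ also satisfies (a) and (b). Indeed $e_1, \ldots, e_N \in A$ project onto the basis of $C_N$, and symmetry passes to the projection since every extension $(x,\xi) \in A$ has its negative $(-x,-\xi)$ in $A$, so that $x \in pr_N A$ forces $-x \in pr_N A$.

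For part (a), I would fix an arbitrary $A \subseteq C_{N+1}$ satisfying (a) and (b) and aim to exhibit a difference-free subset of size $l+1$; producing one for every such $A$ is exactly the assertion $N+1 \to l+1$. Feeding $pr_N A$ into the hypothesis $N \to l$ yields a set $B = \set{x_1, \ldots, x_l} \subseteq pr_N A$ with $x_i - x_j \notin pr_N A$ for $i < j$. This is precisely the input required by Proposition \ref{prop2.5}, whose conclusion hands me extensions $(x_1, \xi_1), \ldots, (x_l, \xi_l)$ and an extra point $z \in A$ whose union $B'$ is difference-free and has cardinality $l+1$. I emphasise that Proposition \ref{prop2.5} is indispensable here: Fact \ref{fact1.4} by itself would only lift $B$ to $l$ pairwise difference-free extensions, one short of the target.

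For part (b), the crux is the monotonicity $N \to l \Rightarrow N+1 \to l$ in the first argument with $l$ \emph{fixed}, and here \emph{Fact \ref{fact1.4} alone suffices}. Given $A \subseteq C_{N+1}$ satisfying (a), (b), I project to $pr_N A$, use $N \to l$ to extract a difference-free $B = \set{x_1, \ldots, x_l}$, choose any extensions $(x_i, \xi_i) \in A$, and invoke Fact \ref{fact1.4} to see that these $l$ distinct points are pairwise difference-free in $A$. Hence every such $A$ violates (c) for size $l$, giving $N+1 \to l$. Since $K(l)$ is the least $N$ with $N \to l$, iterating this from $N = K(l)$ upward delivers $N \to l$ for all $N \ge K(l)$.

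For part (c), I would induct on $l$ through part (a), taking as base case $l = 2$, i.e. $K(2) \le 1$. This I check by hand in $C_1 = \set{0, \pm 1}$: any admissible $A$ contains $\pm 1$, and the pair $\set{1, -1}$ is difference-free since $\pm 2 \notin C_1$, so no admissible $A$ can satisfy (c) for $l = 2$; this is the relation $1 \to 2$. Assuming $(l-1) \to l$ (that is $K(l) \le l-1$), part (a) gives $l \to l+1$, whence $K(l+1) \le l = (l+1)-1$. This closes the induction and in particular shows $K(l)$ is well defined. I do not expect a genuine obstacle, as the real combinatorial work is already sealed inside Proposition \ref{prop2.5}; the one point demanding attention is to resist using the full strength of Proposition \ref{prop2.5} in part (b), where holding $l$ fixed makes the lighter Fact \ref{fact1.4} exactly enough.
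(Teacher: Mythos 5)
Your proposal is correct and follows essentially the same route as the paper: part (a) is exactly the paper's application of Proposition \ref{prop2.5} to a difference-free set extracted from $pr_{N}A$, and part (c) is the paper's induction starting from $K(2)=1$ and iterating (a). The only deviation is in (b), where the paper deduces the claim from (a) (implicitly using that $N\to l+1$ forces $N\to l$, because condition $(c)$ for sets of size $l$ implies condition $(c)$ for sets of size $l+1$), whereas you argue directly with Fact \ref{fact1.4}; both arguments are valid and yours is, if anything, the more economical.
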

\begin{proof}
The proof of $(a)$ is immediate from Proposition \ref{prop2.5}. Also it is immediate that $(a)$ yields $(b)$. For $(c)$ just observe that $K(2)=1$ and proceed inductively using $(a)$.
\end{proof}
In Corollary \ref{cor2.6} we proved that the function $K(l)$ is well defined and has the number $l-1$ as an upper bound. Actually the number $l-1$ is the exact value of $K(l)$. To prove that we will need the following Lemma.
\begin{lem}\label{lem2.7}
For $x=(k,m)\in\set{1, \ldots, n}^{2}\setminus\set{(k,k): 1\leq k\leq n}$ we denote with suppx the set $\set{k,m}$. Let $B$ be a subset of $\set{1, \ldots, n}^{2}\setminus\set{(k,k): 1\leq k\leq n}$ with the following property:
\begin{itemize}
\item [$(*)$] for $x\neq y\in B$ either
\begin{equation}\label{eq2.4}
suppx \cap suppy=\emptyset, \,\,\,\, or
\end{equation}
\begin{equation}\label{eq2.5}
suppx\cap suppy\neq\emptyset
\end{equation}
and if $k\in suppx\cap suppy$, then $k$ is the first coordinate of either $x$ or $y$ and the second of the other.
\end{itemize}
Then we have
\begin{itemize}
\item [$(a)$] $|B|\leq n$ and
\item [$(b)$] if $|B|=n$, then for every $k\in\set{1, \ldots, n}$ there exist $x\neq y\in B$ such that $k\in suppx\cap suppy$.
\end{itemize}
\end{lem}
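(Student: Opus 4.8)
The plan is to recast the hypothesis in the language of directed graphs and then finish by a degree count. I identify each element $x=(k,m)\in B$ with a directed edge from $k$ to $m$ on the vertex set $\set{1,\ldots,n}$. Since $(k,m)$ has distinct coordinates this digraph has no loops, and since $B$ is a set of ordered pairs it has no repeated edges (although both $(k,m)$ and $(m,k)$ may occur in $B$). Under this identification $\mathrm{supp}\,x=\set{k,m}$ is exactly the pair of endpoints of the corresponding edge, so that two elements share a support vertex precisely when their edges are incident.

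The key step, and the one point that needs genuine care, is to translate condition $(*)$ into a degree bound: I claim it forces $\mathrm{outdeg}(k)\le 1$ and $\mathrm{indeg}(k)\le 1$ for every vertex $k$. Indeed, if two distinct edges of $B$ shared the same tail $k$, then $k$ would be the first coordinate of both, and $(*)$ (which demands that a shared vertex be the first coordinate of one edge and the second of the other) would be violated; the same argument rules out two edges with a common head. The only delicate bookkeeping is the case of the pair $(k,m)$ and $(m,k)$, which share both endpoints: one checks directly that $(*)$ permits this pair, consistent with each of $k$ and $m$ acquiring in-degree and out-degree exactly $1$. Once this equivalence is established the remainder is routine, so I expect no substantive obstacle beyond verifying this translation carefully.

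For part $(a)$ I would count edges by their tails. Every edge has exactly one first coordinate, so $\abs{B}=\sum_{k=1}^{n}\mathrm{outdeg}(k)\le n$, using the bound $\mathrm{outdeg}(k)\le 1$ just established. For part $(b)$, suppose $\abs{B}=n$. Then $\sum_{k=1}^{n}\mathrm{outdeg}(k)=n$ with each summand at most $1$ forces $\mathrm{outdeg}(k)=1$ for every $k$, and the symmetric count over heads gives $\mathrm{indeg}(k)=1$ for every $k$ as well. Now fix any $k$, and let $x=(k,m)$ be its unique out-edge and $y=(j,k)$ its unique in-edge. Both have $k$ in their support, and they are distinct, since $x=y$ as ordered pairs would force $(k,m)=(j,k)$, hence $k=m$, a loop, which is excluded. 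Thus $x\neq y$ with $k\in\mathrm{supp}\,x\cap\mathrm{supp}\,y$, which is exactly the assertion of $(b)$.
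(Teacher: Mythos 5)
Your proof is correct and is essentially the paper's own argument in different clothing: the paper counts points of $B$ on the ``vertical'' lines $\set{(k,m):m}$ and ``parallel'' lines $\set{(m,k):m}$ of the grid, which correspond exactly to your out-edges and in-edges at the vertex $k$, and both arguments deduce $(a)$ from the bound of one element per line/tail and $(b)$ from the forced equality of one element per vertex in each role. No substantive difference in approach.
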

\begin{proof}
For $k\in\set{1, \ldots, n}$ we consider the vertical lines $L_{k}^{v}=\set{(k,m):\right.$ $\left. m\in\set{1, \ldots, n}}$ and the parallel lines $L_{k}^{p}=\set{(m,k): m\in\set{1, \ldots, n}}$. Let $B$ be a subset of $\set{1, \ldots, n}^{2}\setminus\set{(k,k): k\in\set{1, \ldots, n}}$ with the property $(*)$.
\begin{itemize}
  \item [$(a)$] It is direct from \eqref{eq2.5} that $B$ has at most one point in any vertical or parallel line, so $|B|\leq n$.

      Now we observe that $B$ intersects the same number of parallel and vertical lines. Indeed if $B$ intersects $k$ vertical and $m$ parallel lines with $k<m$, then by \eqref{eq2.5} one of the vertical lines contains at least two points of $B$, a contradiction. A set of maximum cardinality with the property $(*)$ is the following $\set{(1,2), \ldots, (n-1,n), (n,1)}$.
  \item [$(b)$] Let $B$ be a subset of  $\set{1, \ldots, n}^{2}\setminus\set{(k,k): k\in \set{1, \ldots, n}}$ of maximum cardinality $|B|=n$, with the property $(*)$. It follows that $B$ has at most one point in every vertical line and exactly one point in every parallel line. The last yields assertion $(b)$.
\end{itemize}
\end{proof}
It should be clear now, given the notation we have used, that Theorem \ref{th1.2} of the introduction is an easy consequence of the following.
\begin{thm}\label{th2.8}
$K(n+1)=n$, for $n\in\mathbb{N}$.
\end{thm}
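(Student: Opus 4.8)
The inequality $K(n+1)\le n$ is already in hand: Corollary~\ref{cor2.6}(c) gives $K(l)\le l-1$ for every $l\ge 2$, so with $l=n+1$ we obtain $K(n+1)\le n$. The whole content of the theorem is therefore the reverse inequality $K(n+1)\ge n$. Since, by Corollary~\ref{cor2.6}(b)--(c), the set $\set{N:N\to n+1}$ is exactly $\set{N:N\ge K(n+1)}$, it suffices to exhibit a single subset $A$ of $C_{n-1}$ satisfying $(a)$ and $(b)$ for which \emph{no} $(n+1)$-element subset is difference-free: this shows $(n-1)\not\to(n+1)$ and hence $K(n+1)>n-1$.

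The plan for the construction is to set $e_{n}:=0$ and take
\[
A=\set{e_{a}-e_{b}:1\le a\neq b\le n}\subseteq C_{n-1},
\]
that is, $A=\set{\pm e_{i}:1\le i\le n-1}\cup\set{\pm(e_{i}-e_{j}):1\le i<j\le n-1}$. This set is symmetric and contains each $e_{i}$, so $(a)$ and $(b)$ hold. The key device is the bijection $e_{a}-e_{b}\leftrightarrow(a,b)$ between the nonzero vectors of $A$ and the off-diagonal pairs of $\set{1,\dots,n}^{2}$, which will let me bring Lemma~\ref{lem2.7} to bear.

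The heart of the argument is the membership criterion: for distinct pairs $(a,b)$ and $(c,d)$ one has $(e_{a}-e_{b})-(e_{c}-e_{d})\in A$ if and only if $a=c$ or $b=d$. I would verify this by examining the four-term expression $e_{a}+e_{d}-e_{b}-e_{c}$ and checking that it collapses to a single root (or to $0$) exactly when a positive and a negative index coincide, namely when $a=c$ (yielding $e_{d}-e_{b}$) or $b=d$ (yielding $e_{a}-e_{c}$); in the remaining cases, in particular the ``crossed'' coincidences $a=d$ or $b=c$, the expression has a coordinate equal to $\pm 2$ or four nonzero coordinates and so lies outside $A$. Consequently a set $B\subseteq A$ of nonzero vectors is difference-free with respect to $A$ precisely when the associated pairs have pairwise distinct first coordinates and pairwise distinct second coordinates, which is exactly property $(*)$ of Lemma~\ref{lem2.7}. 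Lemma~\ref{lem2.7}(a) then forces $|B|\le n$.

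It remains to dispose of the zero vector: if $0\in B$, then for any other $x\in B$ we have $x-0=x\in A$, so a difference-free set containing $0$ is a singleton. Hence every difference-free subset of $A$ has at most $n$ elements, so every $(n+1)$-element subset of $A$ contains distinct $x,y$ with $x-y\in A$; that is, $A$ satisfies $(c)$ for $l=n+1$ and witnesses $(n-1)\not\to(n+1)$. Together with $K(n+1)\le n$ this gives $K(n+1)=n$. I expect the main obstacle to be the case analysis establishing the criterion $(e_{a}-e_{b})-(e_{c}-e_{d})\in A$, and in particular confirming that it coincides \emph{verbatim} with property $(*)$, since one must show that the crossed coincidences $a=d$ and $b=c$ produce vectors outside $A$ rather than inside it.
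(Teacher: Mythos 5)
Your proof is correct, and it uses the same witness set as the paper --- in $C_{n-1}$ the paper takes $A=\set{\pm e_{i}}\cup\set{e_{i}-e_{j}:i\neq j}$, which is exactly your $\set{e_{a}-e_{b}:1\le a\neq b\le n}$ under the convention $e_{n}:=0$ --- and the same key tool, Lemma~\ref{lem2.7}. The difference is in how the argument closes, and your packaging is genuinely cleaner. The paper splits a putative difference-free $B$ of size $n+1$ into $B\cap A_{1}$ and $B\cap A_{2}$, proves $\abs{B\cap A_{1}}\le 2$, applies Lemma~\ref{lem2.7}(a) with parameter $l-2=n-1$ to get $\abs{B\cap A_{2}}\le n-1$, and since these bounds only add up to $n+1$ it must then invoke the extremal statement Lemma~\ref{lem2.7}(b) to kill the case $\abs{B\cap A_{1}}=2$, $\abs{B\cap A_{2}}=n-1$. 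Your encoding absorbs $\pm e_{i}$ into the pairs $(i,n)$ and $(n,i)$, so the membership criterion ``$(e_{a}-e_{b})-(e_{c}-e_{d})\in A$ iff $a=c$ or $b=d$'' (which you verify correctly, including the crossed coincidences $a=d$, $b=c$) translates difference-freeness of all of $B$ into property $(*)$ on $\set{1,\dots,n}^{2}$, and a single application of Lemma~\ref{lem2.7}(a) with parameter $n$ gives $\abs{B}\le n$ outright; part (b) of the lemma and the $A_{1}$/$A_{2}$ case analysis are never needed. Two trivial remarks: your paragraph about the zero vector is vacuous, since $e_{a}\neq e_{b}$ for $a\neq b$ under your convention and hence $0\notin A$; and the case $n=1$ of the theorem is not covered by the construction but is immediate from $K(2)=1$.
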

\begin{proof}
Set $l=n+1$. By Corollary \ref{cor2.6} we have that $K(l)\leq l-1$, for every $l\geq2$. So it is enough to show that $l-2<K(l)$, for every $l>2$ or equivalently that for every $l\geq2$ there exists a subset $A$ of $C_{l-2}$ such that
\begin{enumerate}
  \item [(1)] $e_{i}\in A$, for $1\leq i\leq l-2$
  \item [(2)] $A$ is symmetric
  \item [(3)] there is no subset $B$ of $A$ with $|B|=l$ and $x-y\notin A$, for every $x\neq y \in B$.
\end{enumerate}
Suppose the contrary, so there exists $l>2$ such that for every subset $A$ of $C_{l-2}$ which satisfies conditions (1) and (2) there is a subset $B$ of $A$ such that $|B|=l$ and $x-y\notin A$ for every $x\neq y\in B$. Let $l>2$ be as above. We consider the sets $A_{1}=\set{\pm e_{i}: 1\leq i\leq l-2}$ and $A_{2}=\set{e_{i}-e_{j}: 1\leq i\neq j\leq l-2}$. It is direct to see that $A_{1}\cap A_{2}=\emptyset$ and that the set $A=A_{1}\cup A_{2}$ is a subset of $C_{l-2}$ which satisfies conditions (1) and (2). For $x\in A$ and $i\in suppx$ we will say that $e_{i}$ is of positive (resp. negative) sign in $x$ if $x$ is of the form $e_{i}$ (resp. $-e_{i}$)  or $e_{i}-e_{j}$ (resp. $e_{j}-e_{i}$) for some $j\in\set{1, \ldots, l-2}$. Let now $B$ be a subset of $A$ such that $|B|=l$ and $x-y\notin A$ for every $x\neq y\in B$. We observe that $|B\cap A_{1}|\leq2$. Indeed assuming that there exist distinct $x, y, z \in B\cap A_{1}$ then two of them, say $x$ and $y$, have the same sign, so we have that either $x=e_{i}$ and $y=e_{j}$ or $x=-e_{i}$ and $y=-e_{j}$. Any possible result we take is $x-y\in A$, a contradiction. The above observation yields that $|B\cap A_{2}|\geq l-2$ \eqref{eq4}.
\\ \\
\underline{\textit{Claim.}} \textit{Let $x\neq y\in A_{2}$, then $x-y\notin A$ if and only if
\\ \\
either (a) $suppx\cap suppy=\emptyset$
\\ \\
or (b) $suppx\cap suppy\neq \emptyset$ and if $i\in suppx\cap suppy$, then $e_{i}$ is of positive sign in either $x$ or $y$ and of negative sign in the other.
\\ \\
Proof of the Claim.}
For the direction $(\Rightarrow)$ it is enough to show that if assertion $(a)$ does not hold, then assertion $(b)$ holds. Let $x\neq y\in A_{2}$ such that $suppx\cap suppy\neq\emptyset$. Let $i\in suppx\cap suppy$. If $e_{i}$ is of the same sign in $x$ and $y$, let us assume of positive sign, we have that $x=e_{i}-e_{j}$, $y=e_{i}-e_{k}$, for some $j\neq k$ and consequently that $x-y=e_{k}-e_{j}\in A$, a contradiction. Should we have that $suppx\cap suppy=\emptyset$ for some $x, y\in A_{2}$ it is direct to show that $x-y\notin A$. So for the converse implication it is enough to show that if $x\neq y\in A_{2}$ for which assertion $(b)$ holds, then $x-y\notin A$. The last is just a matter of calculations.\\
We consider the mapping $f: A_{2}\to \set{1, \ldots, l-2}^{2}$,
\[
f(e_{i}-e_{j})=(i,j).
\]
It is easy to check that $f$ is an one-to-one mapping and that by the above Claim the image $f(B\cap A_{2})$ of $B\cap A_{2}$ satisfies $(*)$ of Lemma \ref{lem2.7}. So by Lemma \ref{lem2.7}, $|B\cap A_{2}|\leq l-2$. Now by \eqref{eq4} we take $|B\cap A_{2}|=l-2$ and $|B\cap A_{1}|=2$. Let $i\in\set{1, \ldots, l-2}$ such that $e_{i}\in B\cap A_{1}$. Again by Lemma \ref{lem2.7} we take that there exists $1\leq i\neq j\leq l-2$ such that $e_{i}-e_{j}\in B\cap A_{2}$. Then $e_{i}-(e_{i}-e_{j})=e_{j}$ must not belong to $A$, a contradiction.
\end{proof}
The following two results concern the connection between Kottman's Lemma and the discussed above finite version of it. First in Proposition \ref{prop2.9} we prove using Kottman's Lemma, that the function $K(l)$ is well defined, then we give an alternative proof of Kottman's Lemma using Proposition \ref{prop2.5}. These results show that our approach based on Proposition \ref{prop2.5}, is somewhat stronger than this of Kottman.
\begin{prop}\label{prop2.9}
The function $K(l)$ is well defined.
\end{prop}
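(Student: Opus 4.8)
The plan is to argue by contradiction and to manufacture, out of a hypothetical family of finite counterexamples, a single infinite set that violates Kottman's Lemma. Suppose $K(l)$ is not well defined. Since $K(l)$ is by definition the least $N$ with $N\to l$, this amounts to saying that $N\to l$ fails for every $N\geq 2$. Unwinding Notation \ref{not1.3}(1), for each such $N$ I would fix a witnessing set $A_{N}\subseteq C_{N}$ satisfying: (a) $e_{k}\in A_{N}$ for $1\leq k\leq N$; (b) $A_{N}$ is symmetric; and (c) every $B\subseteq A_{N}$ with $|B|=l$ contains distinct $x,y$ with $x-y\in A_{N}$. Regarding each $C_{N}$ as the set of $0,\pm 1$ sequences supported in the first $N$ coordinates, I view every $A_{N}$ as a subset of the countable set $\mathcal{U}$.

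Next I would pass to a limit by a standard diagonal (compactness) argument. Identifying subsets of $\mathcal{U}$ with their indicator functions in the compact metrizable space $\set{0,1}^{\mathcal{U}}$, I extract a subsequence $(A_{N_{j}})_{j}$ converging pointwise and set
\[
A=\set{u\in\mathcal{U}: u\in A_{N_{j}}\ \text{for all sufficiently large}\ j}.
\]
Verifying the first two hypotheses of Kottman's Lemma for $A$ should be routine: for fixed $k$ one has $e_{k}\in A_{N_{j}}$ as soon as $N_{j}\geq k$, so $e_{k}\in A$; and if $u\in A$, then $u\in A_{N_{j}}$ eventually, whence $-u\in A_{N_{j}}$ eventually by symmetry of each $A_{N_{j}}$, so $-u\in A$. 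Thus $A$ will satisfy conditions (a) and (b).

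The key step is to transfer condition (c), and I plan to establish the stronger assertion that \emph{every} $l$-element subset $B=\set{b_{1},\dots,b_{l}}$ of $A$ contains distinct $x,y$ with $x-y\in A$. Each $b_{i}$ has finite support and lies in $A_{N_{j}}$ for all large $j$, so there are $m$ and $j_{0}$ with $b_{1},\dots,b_{l}\in C_{m}$ and $B\subseteq A_{N_{j}}$, $N_{j}\geq m$ whenever $j\geq j_{0}$. For each such $j$, property (c) of $A_{N_{j}}$ produces distinct $x,y\in B$ with $x-y\in A_{N_{j}}$; since $B$ admits only finitely many ordered pairs, by pigeonhole a single pair $(x_{0},y_{0})$ works for infinitely many $j$. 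As $x_{0}-y_{0}\in A_{N_{j}}\subseteq C_{N_{j}}$ forces its entries to lie in $\set{0,\pm 1}$ (so $x_{0}-y_{0}\in\mathcal{U}$), pointwise convergence of the indicators at this fixed point gives $x_{0}-y_{0}\in A$. Finally, any infinite subset of $A$ contains an $l$-element subset, so $A$ satisfies condition (c) of Kottman's Lemma as well. Hence $A\subseteq\mathcal{U}$ obeys (a), (b), (c), contradicting Kottman's Lemma, and therefore $K(l)$ is well defined.

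The main obstacle is exactly the passage from the family of finite counterexamples to one infinite set, i.e. the compactness/diagonal extraction combined with the pigeonhole observation that, among the finitely many candidate difference-pairs inside a fixed $l$-set, at least one survives along a subsequence. The remaining verifications are bookkeeping, the one subtle point being that the difference $x_{0}-y_{0}$ automatically has entries in $\set{0,\pm 1}$ precisely because it is assumed to belong to some $A_{N_{j}}\subseteq C_{N_{j}}$, which is what keeps it inside $\mathcal{U}$.
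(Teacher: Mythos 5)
Your proof is correct and follows essentially the same route as the paper's: both assume that $N\to l$ fails for all $N$, fix finite witnesses $A_{N}\subseteq C_{N}$, extract by compactness/diagonalization a single limit set in $\mathcal{U}$ satisfying (a), (b), (c), and then invoke Kottman's Lemma. The only difference is packaging --- the paper stabilizes the restrictions $A_{N}^{m}$ exactly along nested infinite index sets, whereas you take a pointwise limit in $\set{0,1}^{\mathcal{U}}$ and add a pigeonhole step to transfer condition (c).
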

\begin{proof}
Assume that there is an $l\geq2$ such that $N\nrightarrow l$ for every $N\geq l$. So for $N\geq l$ we can choose $A_{N}\subseteq C_{N}$ which satisfies the conditions $(a)$, $(b)$ and $(c)$ of \ref{not1.3}. Now for $N\geq l$ and $l\leq m\leq N$ put $A_{N}^{m}=\set{x\in A_{N}: \max suppx \leq m}$. We can consider the set $A_{N}^{m}$ as a subset of $C_{m}$ which satisfies the conditions $(a)$, $(b)$ and $(c)$ of \ref{not1.3} for any $l\leq m\leq N$.
Since for every $N\geq l$, $A_{N}^{l}$ is a subset of $C_{l}$ and the powerset of $C_{l}$ is finite, we can choose an infinite subset $M_{0}$ of $\mathbb{N}$ and a subset $\Gamma_{0}$ of $C_{l}$ such that
\[
A_{N}^{l}=\Gamma_{0}, \,\,\, \textrm{for}\,\,\, \textrm{every}\,\,\, N\in M_{0}.
\]
By the same argument we can inductively choose a decreasing sequence $(M_{j})_{j\geq0}$ of infinite subsets of $\mathbb{N}$ and an increasing sequence of sets $(\Gamma_{j})_{j\geq0}$ such that
\begin{enumerate}
  \item [(1)] $\Gamma_{j}\subseteq C_{l+j}$, for every $j\geq0$
  \item [(2)] $A_{N}^{l+j}=\Gamma_{j}$, for every $N\in M_{j}$.
\end{enumerate}
For $j\geq0$, we can consider the set $\Gamma_{j}$ as a subset of $c_{00}$ by adding zero coordinates after the $(l+j)$-th coordinate of $x$, for every $x\in\Gamma_{j}$. Under this consideration we can easily check that
\begin{enumerate}
\item [(3)] $e_{i}\in\Gamma_{j}$, for $1\leq i\leq l+j$ (we now refer to $e_{i}\in c_{00}$)
\item [(4)] $\Gamma_{j}$ satisfies the conditions $(b)$ and $(c)$ of \ref{not1.3} for $j\geq0$.
\end{enumerate}
Now putting $\Gamma=\bigcup_{j=0}^{\infty}\Gamma_{j}$ we observe that
\begin{enumerate}
\item [(5)] $e_{i}\in\Gamma$, for $i\in\mathbb{N}$
\item [(6)] $\Gamma$ is symmetric.
\end{enumerate}
Let $B$  be an infinite subset of $\Gamma$ and $B'$ any subset of $B$  with $|B'|=l$. By the construction of $\Gamma$ there must be $j\geq0$ such that  $\max suppx\leq l+j$, for every $x\in B'$. So $B'\subseteq\Gamma_{j}$. Since $\Gamma_{j}$ satisfies $(c)$ of \ref{not1.3} there must be $x\neq y\in B'\subseteq B$ such that $x-y\in\Gamma_{j}\subseteq\Gamma$, which is a contradiction by (5), (6) and Kottman's Lemma.
\end{proof}
\begin{prop}\label{prop2.10}
Proposition \ref{prop2.5} implies Kottman's Lemma.
\end{prop}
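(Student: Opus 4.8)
The plan is to prove the restated form of Kottman's Lemma: given any $A\subseteq\mathcal U$ satisfying $(a)$ and $(b)$, produce an \emph{infinite} subset of $A$ that is difference-free with respect to $A$. Throughout put $A_n=pr_nA\subseteq C_n$. Two elementary observations drive everything. First, each $A_n$ satisfies $(a)$ and $(b)$ in $C_n$ (since $pr_ne_k=e_k$ for $k\le n$ and $pr_n$ commutes with negation), it is finite, and the projections are coherent: $pr_nA_{n+1}=pr_n(pr_{n+1}A)=A_n$. Second, the \emph{lifting principle}: if $D\subseteq A_m$ is difference-free in $A_m$ and for each $d\in D$ we choose any $\tilde d\in A$ with $pr_m\tilde d=d$, then $\set{\tilde d}$ is difference-free in $A$, for $\tilde d-\tilde d'\in A$ would force $pr_m(\tilde d-\tilde d')=d-d'\in A_m$; this is the projection argument underlying Fact~\ref{fact1.4}. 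I will also use freely that, by Corollary~\ref{cor2.6} and Theorem~\ref{th2.8} (both consequences of Proposition~\ref{prop2.5}), $K(l)=l-1$, so that $n\to l$ whenever $l\le n+1$.

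Next I would iterate Proposition~\ref{prop2.5} to build a coherent, growing tower of difference-free sets in the projections. Start with $B_1=\set{e_1,-e_1}\subseteq A_1$, which is difference-free in $A_1$ because $\pm 2\notin C_1$, and has $|B_1|=2$. Given $B_n\subseteq A_n$ difference-free with $|B_n|=n+1$, apply Proposition~\ref{prop2.5} with $N=n$, with $A_{n+1}\subseteq C_{n+1}$ (which satisfies $(a),(b)$) in the role of ``$A$'', and with $B=B_n\subseteq pr_nA_{n+1}=A_n$; the required hypothesis is exactly $n\to(n+1)$, i.e.\ $K(n+1)=n\le n$. This yields $B_{n+1}\subseteq A_{n+1}$, difference-free in $A_{n+1}$, of size $n+2$, consisting of extensions of the elements of $B_n$ together with one new vector. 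Hence $pr_n$ carries the extended part of $B_{n+1}$ onto $B_n$, the tower is coherent, and each ``column'' stabilises to an ideal point $w_i\in\set{0,\pm1}^{\mathbb N}$ with $pr_nw_i$ the $i$-th entry of $B_n$; by construction these points are pairwise difference-free at every finite level, i.e.\ $pr_nw_i-pr_nw_j\notin A_n$ for $i\neq j$ and all large $n$.

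The final step is to convert this into an honest infinite difference-free subset of $A$, and here lies the main obstacle. The columns $w_i$ lie in the closure of $A$ but need not be finitely supported, let alone belong to $A\subseteq\mathcal U$, whereas the conclusion demands the difference-free set sit inside $A$. By the lifting principle each $A_n$ does yield a genuine difference-free subset of $A$ of size $n+1$, but these finite sets are not coherent, and arbitrarily large finite difference-free subsets need not assemble into an infinite one. I would attack this by a diagonal/compactness argument dual to Proposition~\ref{prop2.9}: exploiting the finiteness of each $A\cap C_m$, extract a coherent chain $\Delta_0\subseteq\Delta_1\subseteq\cdots$ of genuine difference-free subsets with $\Delta_m\subseteq A\cap C_m$, whose increasing union $\Delta=\bigcup_m\Delta_m\subseteq A$ is difference-free, difference-freeness of $\Delta$ being checked inside a single $\Delta_m$ and inherited from the tower via lifting.

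The crux, and the step I expect to be hardest, is to guarantee simultaneously that $|\Delta_m|\to\infty$ and that each $\Delta_m$ consists of genuine, finitely supported members of $A$: the projection tower supplies both growth and coherence, but only in the projected world, so the diagonal selection must pin down finite-support representatives without destroying either. Controlling the supports of the chosen lifts, so that the limit is a true infinite subset of $A$ rather than a family of ideal points of $\overline A$, is exactly the feature distinguishing the infinite statement from its finite shadow $n\to l$.
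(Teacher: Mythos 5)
Your tower construction is exactly the paper's: start from $B_1=\set{e_1,-e_1}\subseteq A_1$ and iterate Proposition \ref{prop2.5} (legitimately invoking $n\to n+1$, which follows from Proposition \ref{prop2.5} via Corollary \ref{cor2.6}) to obtain coherent difference-free sets $B_n\subseteq A_n=pr_nA$ with $|B_n|=n+1$, each $B_{n+1}$ consisting of extensions of $B_n$ plus one new point. Up to that point your argument is correct and coincides with the paper's proof of Proposition \ref{prop2.10} step for step, including the projection argument (your ``lifting principle'', which is Fact \ref{fact1.4}).

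The problem is that you stop there. The passage from the tower $(B_n)$ to an infinite difference-free subset of $A$ itself is not an optional refinement to be deferred --- it \emph{is} the conclusion of the proposition --- and your last two paragraphs explicitly decline to carry it out, offering only the label ``diagonal/compactness argument'' and then conceding that you do not see how to force the relevant elements to be genuine, finitely supported members of $A$. A writeup whose final sentence identifies ``the crux'' as ``the step I expect to be hardest'' has not proved the statement. For the record, the paper concludes by setting $B=\set{x\in A: pr_nx\in B_n \text{ eventually for every } n}$: difference-freeness of $B$ is then immediate (given $x\neq y\in B$ with $x-y\in A$, project to a level $n$ at which $pr_nx\neq pr_ny$ both lie in $B_n$ and contradict the difference-freeness of $B_n$ in $A_n$), and the paper asserts that this $B$ is infinite. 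You are right that this infinitude is the delicate point --- it is precisely where the infinitely many columns of the tower must be seen to be realized by actual elements of $A\subseteq c_{00}$, and the paper dispatches it with a bare assertion --- but diagnosing the difficulty is not the same as resolving it. As submitted, your proposal contains no proof of Proposition \ref{prop2.10}.
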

\begin{proof}
Let $A$ be a subset of $\mathcal{U}$ such that satisfies conditions $(a)$ and $(b)$ of Kottman's Lemma. We put $A_{n}=pr_{n}A$, for $n\in\mathbb{N}$. It is direct that if $n\leq m$, then $pr_{n}A_{m}=A_{n}$. Notice that $A_{n}$ is a symmetric subset of $C_{n}$ such that $e_{k}\in A_{n}$, for every $1\leq k\leq n$. We next define a sequence of sets $(B_{n})$ such that
\begin{equation}\label{eq2.6}
B_{n}\subseteq A_{n}, \,\,\textrm{for}\,\,\, \textrm{every}\,\,\, n\in \mathbb{N}
\end{equation}
\begin{equation}\label{eq2.7}
|B_{n}|=n+1\,\,\, \textrm{and}\,\,\, x-y\notin A_{n}, \,\,\textrm{for} \,\,\, \textrm{every}\,\, x\neq y\in B_{n}
\end{equation}
\begin{equation}\label{eq2.8}
if\,\, n\leq m, B_{n}\subseteq pr_{n}B_{m}
\end{equation}
Suppose that $B_{n}$ has already been defined. Since $pr_{n}A_{n+1}=A_{n}$ property \eqref{eq2.7} and Proposition \ref{prop2.5} yield that there exists $B_{n+1}\subseteq A_{n+1}$ comprised from extensions of elements of $B_{n}$ in $A_{n+1}$ and an element of $A_{n+1}$ such that $x-y\notin A_{n+1}$ for every $x\neq y\in B_{n+1}$. Put $B=\set{x\in A: pr_{n}x\in B_{n}\,\,\right.$ $\left.\textrm{eventually\,\,\, for\,\,\, every}\,\,\, n}$, then $B$ is an infinite subset of $A$. Now it suffices to show that $x-y\notin A$, for every $x\neq y\in B$. Indeed assume that there exist $x\neq y\in B$ such that $x-y\in A$. Choose $n\in\mathbb{N}$  such that $pr_{n}x, pr_{n}y\in B_{n}$ and $pr_{n}x\neq pr_{n}y$. Since $x-y\in A$ we have that $pr_{n}(x-y)=pr_{n}x-pr_{n}y \in A_{n}$, a contradiction.
\end{proof}

\section{The sum-free and the complex version of Kottman's Lemma}

\hspace*{0.5cm} In this section we state and prove results in the spirit of Theorems \ref{th1.2} and \ref{th1.3} of the introduction, the sum-free sets case and the complex case. \\ 

We remind the definition of a sum-free set.
\begin{defn}
Let $(G,+)$ be an abelian group and $A$ a non empty subset of $G$. A non empty subset $B$ of $A$ will be called sum-free with respect to $A$ if $x+y\notin A$, for every $x, y \in B$ with $x\neq y$.
\end{defn}
It should be clear that Theorem \ref{th3.2} below is a stronger version of Theorem \ref{th1.4} of the introduction. For the needs of this theorem we will use the following notation.\\ \\
\textit{For $N, l\in \mathbb{N}$, we will write $N\xrightarrow[s]{}l$, if there is no subset $A$ of $C_{N}$ such that:
\begin{itemize}
\item [$(a)$] $e_{k}\in A$, for $1\leq k\leq N$
\item [$(b)$] $A$ is symmetric
\item [$(c)$] for every subset $B$ of $A$ with $|B|=l$ there exist $x, y\in B$ such that $x+y\in A$.
\end{itemize}
$S(l)$ denotes the minimal $N\in\mathbb{N}$ such that $N\xrightarrow[s]{} l$. It is clear that if $N\geq S(l)$ then every subset $A$ of $C_{N}$ satisfying (a) and (b) contains a subset $B$ with $|B|=l$, which is sum-free with respect to $A$. We notice that $1=S(1)\leq S(l)\leq S(l+1)$, for $l\in \mathbb{N}$.
}
\begin{thm}\label{th3.2}
(1) Let $l\in\mathbb{N}$. If $N\xrightarrow[s]{}l$, then $N+1\xrightarrow[s]{}l+1$.\\
(2) $S(l)=l$, for every $l\geq 1$.
\end{thm}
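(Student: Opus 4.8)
My strategy is to mirror the development of the difference-free case (Notation \ref{not1.3}, Proposition \ref{prop2.5}, Corollary \ref{cor2.6}, and Theorem \ref{th2.8}) and adapt each step to the sum-free relation $x+y\in A$. There are two assertions. For part (1), I would prove a sum-free analogue of Proposition \ref{prop2.5}: assuming $N\xrightarrow[s]{}l$ and given $A\subseteq C_{N+1}$ satisfying (a),(b) together with a sum-free set $B=\set{x_1,\dots,x_l}\subseteq pr_N A$ (meaning $x_i+x_j\notin pr_N A$ for all $i,j$), produce extensions $(x_i,\xi_i)\in A$ and an extra vector $z\in A$ forming a sum-free subset $B'$ of size $l+1$. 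Part (1) then follows exactly as Corollary \ref{cor2.6}(a) follows from Proposition \ref{prop2.5}. For part (2), the inequality $S(l)\le l$ comes by induction from part (1), bootstrapping from $S(1)=1$; the matching lower bound $S(l)>l-1$ requires exhibiting, for each $l$, a symmetric $A\subseteq C_{l-1}$ containing all $e_i$ with no sum-free subset of size $l$.

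First I would set up the sum-free version of Fact \ref{fact1.4}: if $x,y\in pr_N A$ with $x+y\notin pr_N A$, then for any extensions $(x,\xi),(y,\zeta)\in A$ we have $(x,\xi)+(y,\zeta)\notin A$, since otherwise $(x+y,\xi+\zeta)\in A$ would force $x+y\in pr_N A$. This is the engine that lets the induction on the last coordinate go through. The harder and more delicate piece is the extension lemma itself. Following the template of Proposition \ref{prop2.5}, I would argue by contradiction: assume every choice of extensions plus every $z$ yields a pair summing into $A$. The subtlety is that sums behave differently from differences under sign changes — in the difference case the key witnesses were objects like $(x,1)-(x,-1)=(0,2)$, whereas here I must track how $\xi_i+\xi_j$ and $\xi_i+1$ land in $\set{0,\pm1,\pm2}$. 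I expect to run an analogue of Claims 1 and 2: first show some $x\in B$ admits the extension $(x,1)\in A$, then upgrade to an $x$ with all three extensions $(x,0),(x,\pm1)\in A$, and finally derive a contradiction by exhibiting a forbidden sum such as $(x,1)+(x,-1)=(2x,0)$ or $(x,1)+(0,1)$. Getting the bookkeeping of signs and the role of the anchor vector $(0,1)$ right is where the real work lies, and it is the step I expect to be the main obstacle.

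For the lower bound in part (2), I need a construction showing $l-1\not\xrightarrow[s]{}l$, i.e.\ a symmetric $A\subseteq C_{l-1}$ with $e_i\in A$ admitting no sum-free $B$ with $|B|=l$. The natural candidate, parallel to the difference-free Theorem \ref{th2.8}, is $A=A_1\cup A_2$ with $A_1=\set{\pm e_i:1\le i\le l-1}$ and a second layer built from vectors with two nonzero coordinates (e.g.\ $e_i+e_j$ or $e_i-e_j$) chosen so that the relevant sums always fall back into $A$. I would then translate the sum-free condition on a subset $B$ into a combinatorial condition on supports and signs and apply a counting bound of the same flavor as Lemma \ref{lem2.7} to force $|B|\le l-1$. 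The main risk here is that the sum relation may require a slightly different auxiliary set than the difference case, so I would verify carefully that every sum of two chosen generators that could appear as a difference-obstruction still lands in $A$; once the support/sign dictionary is fixed, the cardinality count should close the argument.
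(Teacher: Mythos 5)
Your overall strategy for part (1) is the right one and matches the paper's in spirit: project to $C_N$, extract a sum-free set $B$ of size $l$ there, prove the sum-free analogue of Fact \ref{fact1.4}, and then extend the elements of $B$ into $A$ together with the anchor $(0,1)=e_{N+1}$. However, the specific roadmap ``mirror Claims 1 and 2 of Proposition \ref{prop2.5}'' runs into a real obstruction at Claim 2. In the difference case the block argument works because $(x_i,\xi_i-1)\in A$ with $\xi_i-1\in\set{0,-1,-2}$ can be pushed back into the top block; for sums one gets $(x_i,\xi_i+1)\in A$ with $\xi_i+1\in\set{2,1,0}$, and with the blocks ordered as in \eqref{eq2} the case $\xi_i=-1$ gives only $(x_i,0)\in A$, which contradicts nothing --- so the induction on blocks does not localize $i$ and you never reach an $x$ with all three extensions. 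The fix (which is what the paper does) is to choose each $\xi_i$ \emph{maximal} in the order $1>0>-1$ among the available extensions of $x_i$; then $(x_i,\xi_i+1)\in A$ is impossible outright ($\xi_i=1$ produces the coordinate $2$, while $\xi_i\in\set{0,-1}$ would contradict maximality), all other pairs are killed by the Fact-\ref{fact1.4} analogue, and no Claim 2 is needed at all. In this respect the sum case is strictly easier than the difference case, not harder.

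The more serious gap is the lower bound $S(l)>l-1$ in part (2). Your proposed witness, a two-layer set $A_1\cup A_2$ with $A_2$ built from vectors $\pm e_i\pm e_j$ in analogy with Theorem \ref{th2.8}, does not address the central difficulty: the antipodal pair $\set{e_i,-e_i}$ is always sum-free with respect to such an $A$, since $e_i+(-e_i)=0$ and $0\notin A_1\cup A_2$, so $\set{\pm e_i:1\leq i\leq l-1}$ already contains sum-free subsets of size up to $2(l-1)$; moreover $A_2$ itself tends to contain large sum-free subsets (e.g.\ vectors with pairwise disjoint supports, whose pairwise sums have four nonzero coordinates and so leave $C_{l-1}$). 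The paper's construction is entirely different and much simpler: take $A=\set{\pm e_i: 1\leq i\leq l-1}\cup\set{0}$ in $C_{l-1}$. Then any $B\subseteq A$ with $|B|=l$ either contains $0$ (and $0+x=x\in A$ for any other $x\in B$) or lies in $\set{\pm e_i}$, where pigeonhole over the $l-1$ indices forces an antipodal pair, whose sum is $0\in A$. The key idea you are missing is that for sums, unlike differences, the zero vector is the natural obstruction, and adjoining it to $A$ is exactly what makes the counting close.
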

\begin{proof}
(1) Assume that $N\xrightarrow[s]{}l$ and $N+1\nrightarrow_{s} l+1$. Then there exists a subset $A$ of $C_{N+1}$ satisfying conditions $(a)$, $(b)$ and $(c)$. Observe that $pr_{N}A$ is a subset of $C_{N}$ satisfying conditions $(a)$ and $(b)$. Since $N\xrightarrow[s]{}l$ there exists a subset $B$ of $pr_{N}A$ such that $|B|=l$ and $x+y\notin pr_{N}A$, for every $x, y\in B$.\\ \\
\underline{\textit{Claim}}
\textit{There exists $x\in B$ such that $(x,1)\in A$.}
\\ \\
\textit{Proof of Claim.}
The proof of this Claim is similar to the proof of Claim 1 of Proposition \ref{prop2.5} so it is omitted.
\\ \\
It follows from the Claim that there exists $1\leq k\leq l$ such that
\begin{itemize}
\item [$(I)$] $\set{x_{1}, \ldots, x_{k}}=\set{x\in B: (x,1)\in A}$.
\end{itemize}
Let $m>k$ such that
\begin{itemize}
\item [$(II)$] $\set{x_{k+1}, \ldots, x_{m}}=\set{x\in B: (x,1)\notin A \,\,\, and\,\,\,(x,0)\in A}$\\
    $\set{x_{m+1}, \ldots, x_{l}}=\set{x\in B: (x,1), (x,0)\notin A \,\,\,and\,\,\,(x,-1)\in A}.$
\end{itemize}
Notice that the sets defined in (II) may be empty. Put now $\xi_{i}=1$, for $1\leq i\leq k$, $\xi_{i}=0$, for $k+1\leq i\leq m$ and $\xi_{i}=-1$, for $m+1\leq i\leq l$. Then $(x_{i},\xi_{i})\in A$, for $1\leq i\leq l$. So the set $B'=\set{(x_{1},\xi_{1}), \ldots, (x_{l},\xi_{l}), (0,1)}$ is a subset of $A$ with $|B'|=l$ and consequently there must be $x,y\in B'$ such that $x+y\in A$. As in Fact 2.4, it can be proved that $(x_{i}+x_{j},\xi_{i}+\xi_{j})\notin A$ for every $1\leq i, j\leq l$ which yields that there exists $1\leq i\leq l$ such that $(x_{i},\xi_{i}+1)\in A$, a contradiction due to (I), (II) and the choice of $\xi_{i}$.
\\ \\
(2) From (1) we take that $S(l)\leq l$ for $l\geq2$, also notice that $S(2)=2$. So it suffices to show that $S(l)>l-1$ for $l>2$. Assuming the contrary there exists $l>2$ such that for every subset $A$ of $C_{l-1}$ satisfying conditions $(a)$ and $(b)$ there exists a subset $B$ of $A$, with $|B|=l$, such that $x+y\notin A$ for every $x, y\in B$. Put $A=\set{\pm e_{i}: 1\leq i\leq l-1}\cup\set{0}$. Now, as we may, we choose a subset $B$ of $A$ such that $|B|=l$ and $x+y\notin A$ for every $x, y\in B$. Observe that $0\notin B$ so $B$ is a subset of $\set{\pm e_{i}: 1\leq i\leq l-1}$. We claim that there exists at least one $1\leq i\leq l-1$ such that $e_{i}, -e_{i}\in B$. Indeed differently the cardinality of $B$ would be at most $l-1$, a contradiction. Finally choosing $1\leq i\leq l-1$ such that $e_{i}, -e_{i}\in B$ we take that $0=e_{i}+(-e_{i})\notin A$, a contradiction again.
\end{proof}
As Theorem \ref{th1.3} comes as a consequence of Theorem \ref{th1.2}, essentially with the same proof the next theorem is a consequence of Theorem \ref{th1.4}.
\begin{thm}\label{th3.3}
Let $X$ be a finite dimensional Banach space with $\dim X=n$. Then for any Auerbach basis $\set{(x_{k},x_{k}^{*}): 1\leq k \leq n}$ of $X$ there exist $n$-linear combinations $z_{1}, \ldots, z_{n}$ of the vectors $x_{1}, \ldots, x_{n}$ with coordinates $0,\pm 1$ such that $\norm{z_{k}}=1$ for $1\leq k \leq n$ and $\norm{z_{k}+z_{l}}>1$ for $1\leq k< l\leq n$.
\end{thm}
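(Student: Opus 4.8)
The plan is to follow the proof of Theorem~\ref{th1.3} essentially line by line, replacing differences by sums and the appeal to Theorem~\ref{th1.2} by an appeal to Theorem~\ref{th1.4}. Fix the Auerbach basis $\set{(x_{k},x_{k}^{*}):1\leq k\leq n}$, set
\[
E_{n}=\set{x=\sum_{k=1}^{n}a_{k}x_{k}:\ \norm{x}=1\ \text{and}\ a_{k}\in\set{0,\pm1}},
\]
and let $T:X\to l_{\infty}^{n}$, $T(x)=(x_{k}^{*}(x))$, be the invertible operator used there. Since the basis is Auerbach, $\norm{x_{k}^{*}}=1$, so $\abs{x_{j}^{*}(w)}\leq\norm{w}$ for all $w$ and $j$; moreover, writing $x=\sum a_{k}x_{k}$ we have $T(x)=(a_{1},\ldots,a_{n})$ by biorthogonality, whence $T(E_{n})\subseteq C_{n}$.

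The heart of the matter is the sum-analogue of the observation made in the proof of Theorem~\ref{th2.2}: if $u,v\in E_{n}$ satisfy $u+v\neq0$ and $\norm{u+v}\leq1$, then $u+v\in E_{n}$. Indeed, the $j$-th coordinate of $u+v$ is $x_{j}^{*}(u+v)$, and $\abs{x_{j}^{*}(u+v)}\leq\norm{u+v}\leq1$ forces every coordinate of $u+v$ into $\set{0,\pm1}$; since $u+v\neq0$, some coordinate equals $\pm1$, so $\norm{u+v}\geq1$ and hence $\norm{u+v}=1$. Thus $u+v\in E_{n}$ and $T(u)+T(v)=T(u+v)\in T(E_{n})$.

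Here lies the one genuine divergence from the difference case, and the step I expect to need the most care. For sums, $u+v$ can vanish, namely when $v=-u$, and then $\norm{u+v}=0$; consequently a family that is merely sum-free would be permitted to contain an antipodal pair, which would destroy the desired strict inequality. I would remove this obstruction by applying Theorem~\ref{th1.4} not to $T(E_{n})$ but to $A=T(E_{n})\cup\set{0}$. This $A$ is a subset of $C_{n}$ still satisfying condition $(a)$ (as $e_{k}=T(x_{k})$ with $x_{k}\in E_{n}$) and condition $(b)$ (symmetry, since $E_{n}$ is symmetric and $0=-0$). Theorem~\ref{th1.4} then yields $B\subseteq A$ with $|B|=n$ and $b+b'\notin A$ for all distinct $b,b'\in B$. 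Because $0\in A$, the set $B$ can contain neither $0$ (otherwise, assuming $n\geq2$, any other $b'\in B$ gives $0+b'=b'\in B\subseteq A$; the case $n=1$ is trivial) nor an antipodal pair $b,-b$ (otherwise $b+(-b)=0\in A$).

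Finally I would put $\set{z_{1},\ldots,z_{n}}=T^{-1}(B)$. Since $0\notin B$ we have $B\subseteq T(E_{n})$, so each $z_{k}\in E_{n}$ and $\norm{z_{k}}=1$, giving the first conclusion. For $k<l$ the vectors $z_{k},z_{l}$ are distinct and $z_{k}+z_{l}\neq0$ (no antipodal pair). Were $\norm{z_{k}+z_{l}}\leq1$, the observation above would force $z_{k}+z_{l}\in E_{n}$, hence $T(z_{k})+T(z_{l})\in T(E_{n})\subseteq A$, contradicting $b+b'\notin A$. Therefore $\norm{z_{k}+z_{l}}>1$, as required. Thus the entire argument is the verbatim sum-translation of the proof of Theorem~\ref{th1.3}, the only new ingredient being the adjunction of $0$ to $A$ to rule out antipodal pairs.
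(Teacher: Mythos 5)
Your proposal is correct and follows exactly the route the paper intends: the paper offers no written proof of Theorem \ref{th3.3}, only the remark that it follows from Theorem \ref{th1.4} ``essentially with the same proof'' as Theorem \ref{th1.3}. The one point where a verbatim translation would actually fail is the one you isolated: a set that is sum-free with respect to $T(E_{n})$ alone may contain an antipodal pair $b,-b$, since $b+(-b)=0\notin T(E_{n})$, and then $\norm{z_{k}+z_{l}}=0$ rather than $>1$. Your fix of applying Theorem \ref{th1.4} to $A=T(E_{n})\cup\set{0}$ is the right one (and is consistent with the authors' own use of $0\in A$ in the proof of Theorem \ref{th3.2}(2) to exclude antipodal pairs), so your write-up supplies a necessary detail that the paper's one-line derivation leaves implicit.
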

We now investigate the complex version of Theorem \ref{th1.2}. For this purpose we introduce (again) the proper notation.\\ \\
\begin{notat}\label{not2.4}
Let $N, l\in \mathbb{N}$. We will write $N\xrightarrow[\Complex]{} l$, if there is no subset $A$ of $V_{N}=\set{0,\pm 1, \pm \mathrm{i}}^{N}$, where $\mathrm{i}=\sqrt{-1}$ such that:
\begin{itemize}
\item [$(a)$] $e_{k}\in A$, for $1\leq k\leq N$
\item [$(b)$] if $x\in A$, then $\mathrm{i}x\in A$
\item [$(c)$] for every subset $B$ of $A$ with $|B|=l$ there exist $x\neq y\in B$ such that $x-y\in A$.
\end{itemize}
$K_{\Complex}(l)$ denotes the minimal $N\in\mathbb{N}$ such that $N\xrightarrow[\Complex]{}l$. We notice that $K_{\Complex}(1)=K_{\Complex}(2)=K_{\Complex}(3)=K_{\Complex}(4)=1$ and that $K_{\Complex}(l)\leq K_{\Complex}(l+1)$, for $l\in\mathbb{N}$.
\end{notat}
\begin{prop}\label{prop3.5}
The function $K_{\Complex}(l)$ is well defined. Moreover $K_{\Complex}(l)\leq\frac{l-1}{2}$ for every odd number $l\geq3$.
\end{prop}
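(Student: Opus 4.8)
The plan is to prove the quantitative estimate $K_{\Complex}(l)\leq\frac{l-1}{2}$ for odd $l$ directly, and to read off well-definedness from it together with the monotonicity $K_{\Complex}(l)\leq K_{\Complex}(l+1)$ already recorded in \ref{not2.4}. Writing $l=2N+1$, the estimate is equivalent to the assertion $N\xrightarrow[\Complex]{}2N+1$ for every $N\geq1$, which I would prove by induction on $N$. The base case $N=1$ is exactly $1\xrightarrow[\Complex]{}3$, i.e. $K_{\Complex}(3)=1$, which is one of the values already noted in \ref{not2.4}. For even $l\geq4$ one then gets $K_{\Complex}(l)\leq K_{\Complex}(l+1)\leq\frac{l}{2}$, so $K_{\Complex}$ is finite everywhere and the first assertion follows.

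The inductive step is the complex analogue of Corollary \ref{cor2.6}(a): I claim that $N\xrightarrow[\Complex]{}l$ implies $N+1\xrightarrow[\Complex]{}l+2$. So let $A\subseteq V_{N+1}$ satisfy $(a)$ and $(b)$; I must exhibit a difference-free subset of $A$ of size $l+2$. First, $pr_{N}A\subseteq V_{N}$ again satisfies $(a)$ and $(b)$, since projection keeps $e_{1},\ldots,e_{N}$ and commutes with multiplication by $\mathrm{i}$; moreover, because $A$ is closed under $x\mapsto\mathrm{i}x$ it is closed under $x\mapsto -x=\mathrm{i}^{2}x$, a fact I use repeatedly (in particular $u-v\in A$ iff $v-u\in A$, so difference-freeness need only be checked in one direction). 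As $N\xrightarrow[\Complex]{}l$, the set $pr_{N}A$ fails $(c)$, hence contains a difference-free set $B=\set{x_{1},\ldots,x_{l}}$ with $x_{i}-x_{j}\notin pr_{N}A$ for $i\neq j$. Note $0\notin B$: if $0$ and some $x\neq0$ both lay in $B$ then $0-x=-x\in pr_{N}A$ by negation-closure, contradicting difference-freeness.

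The two extra coordinates of freedom are supplied by $(0,1)$ and $(0,\mathrm{i})$, both in $A$ because $(0,1)=e_{N+1}$ and $(0,\mathrm{i})=\mathrm{i}\,e_{N+1}$; their difference $(0,1-\mathrm{i})$ has last coordinate outside $\set{0,\pm1,\pm\mathrm{i}}$, hence lies outside $V_{N+1}\supseteq A$. I would then choose, for each $i$, an extension $(x_{i},\xi_{i})\in A$ with both $(x_{i},\xi_{i}-1)\notin A$ and $(x_{i},\xi_{i}-\mathrm{i})\notin A$. Granting this, the complex analogue of Fact \ref{fact1.4} (same proof) makes all pairwise differences among the $(x_{i},\xi_{i})$ escape $A$, and the two displayed conditions handle the differences against $(0,1)$ and $(0,\mathrm{i})$, so $B'=\set{(x_{1},\xi_{1}),\ldots,(x_{l},\xi_{l}),(0,1),(0,\mathrm{i})}$ is difference-free of size $l+2$ (all $x_{i}\neq0$, so no coincidences arise).

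The heart of the argument, and the step I expect to be the only genuine obstacle, is the existence of such a $\xi_{i}$. Put $E_{i}=\set{\eta\in\set{0,\pm1,\pm\mathrm{i}}:(x_{i},\eta)\in A}$, which is nonempty since $x_{i}\in pr_{N}A$. A short case analysis shows a good choice always exists: if $-1\in E_{i}$ or $-\mathrm{i}\in E_{i}$, take $\xi_{i}$ to be that value, since $-1-1,\ -1-\mathrm{i},\ -\mathrm{i}-1,\ -\mathrm{i}-\mathrm{i}$ all leave $\set{0,\pm1,\pm\mathrm{i}}$; otherwise $E_{i}\subseteq\set{0,1,\mathrm{i}}$, and if $0\in E_{i}$ then $\xi_{i}=0$ works (as $-1,-\mathrm{i}\notin E_{i}$), while if $0\notin E_{i}$ then any of the values $1,\mathrm{i}$ lying in $E_{i}$ works, the only dangerous difference in each case landing on $(x_{i},0)\notin A$. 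This is precisely where the complex geometry helps: subtracting the two independent directions $1$ and $\mathrm{i}$ from $-1$ (or from $-\mathrm{i}$) escapes the cube in both coordinates at once, which is exactly what fails in the real cube and forces the more delicate two-claim argument of Proposition \ref{prop2.5}.
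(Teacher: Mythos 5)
Your proof is correct, but it takes a genuinely different route from the paper's. The paper proves this proposition in one stroke by realification: it applies the $\Real$-linear isomorphism $f(a_{1}+b_{1}\mathrm{i},\ldots,a_{n}+b_{n}\mathrm{i})=(a_{1},b_{1},\ldots,a_{n},b_{n})$, checks that $f$ carries a set $A\subseteq V_{n}$ satisfying $(a)$ and $(b)$ of \ref{not2.4} to a subset of $C_{2n}$ satisfying $(a)$ and $(b)$ of \ref{not1.3} (closure under multiplication by $\mathrm{i}$ gives both the extra basis vectors $f(\mathrm{i}e_{k})$ and symmetry), and then pulls back the difference-free set of size $2n+1$ supplied by Theorem \ref{th2.8}. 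You instead stay entirely in the complex setting and prove the implication $N\xrightarrow[\Complex]{}l\Rightarrow N+1\xrightarrow[\Complex]{}l+2$, the complex analogue of Corollary \ref{cor2.6}(a) but gaining two points per added coordinate, starting the induction from $K_{\Complex}(3)=1$. I checked your key step: the case analysis on $E_{i}=\set{\eta:(x_{i},\eta)\in A}$ is exhaustive and each chosen $\xi_{i}$ does satisfy $(x_{i},\xi_{i}-1)\notin A$ and $(x_{i},\xi_{i}-\mathrm{i})\notin A$ (the values $-1,-\mathrm{i}$ are self-serving since $-1-1,-1-\mathrm{i},-\mathrm{i}-1,-2\mathrm{i}$ all leave $\set{0,\pm1,\pm\mathrm{i}}$; otherwise $0$, then $1$ or $\mathrm{i}$, work), and your remaining checks (closure under $-1=\mathrm{i}^{2}$, $0\notin B$, the Fact \ref{fact1.4} analogue, $(0,1)-(0,\mathrm{i})\notin V_{N+1}$) are all sound. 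What each approach buys: the paper's argument is shorter given that Theorem \ref{th2.8} is already in hand, but it imports the full strength of the delicate two-claim argument of Proposition \ref{prop2.5}; yours is self-contained and exposes exactly why the complex case is easier --- one can always choose a single extension avoiding both forbidden differences at once, which is what fails over $\Real$. A side benefit of your method is that running the same induction from $K_{\Complex}(4)=1$ immediately gives $N\xrightarrow[\Complex]{}2N+2$, i.e.\ Proposition \ref{prop3.7}, without the separate argument the paper uses there.
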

\begin{proof}
Let $n\in\mathbb{N}$ and $A$ be a subset of $V_{n}$ satisfying conditions $(a)$ and $(b)$ of \ref{not2.4}. We consider the $\Real$-linear isomorphism $f:\Complex^{n}\to \Real^{2n}$ with
\[
f(a_{1}+b_{1}\mathrm{i}, \ldots, a_{n}+b_{n}\mathrm{i})=(a_{1},b_{1}, \ldots, a_{n},b_{n}).
\]
It is obvious that $f(V_{n})\subseteq C_{2n}$. Since $A$ satisfies conditions $(a)$ and $(b)$ of \ref{not2.4}, it is easy to check that $f(A)\subseteq C_{2n}$ and that $f(A)$ satisfies conditions $(a)$ and $(b)$ of \ref{not1.3}. So by Theorem \ref{th2.8} there exists $B\subseteq f(A)$ such that $|B|=2n+1$ and $x-y\notin f(A)$ for every $x\neq y\in B$. Now it is direct that $x-y\notin A$ for every $x\neq y \in f^{-1}(B)$. Thus we have that $n\xrightarrow[\Complex]{}2n+1$ and consequently that $K_{\Complex}(n)\leq K_{\Complex}(2n+1)\leq n$, for $n\in\mathbb{N}$.
\end{proof}
\begin{prop}\label{prop3.6}
$K_{\Complex}(2n+1)=n$, for $n\geq1$.
\end{prop}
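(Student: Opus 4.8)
The plan is to pair the upper bound already in hand with a matching lower bound coming from an explicit extremal example. By Proposition~\ref{prop3.5} we have $K_{\Complex}(2n+1)\le n$, so it remains to prove $K_{\Complex}(2n+1)\ge n$. For this it is enough to show that for every $N\le n-1$ there is a set $A\subseteq V_{N}$ satisfying $(a)$ and $(b)$ of Notation~\ref{not2.4} all of whose difference-free subsets have at most $2n$ elements; then $(c)$ holds for $l=2n+1$, whence $N\nrightarrow_{\Complex}2n+1$ for all $N\le n-1$ and therefore $K_{\Complex}(2n+1)\ge n$. I will in fact produce a set whose difference-free subsets have size at most $2N+2$, which is $\le 2n$ as soon as $N\le n-1$.

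The construction is the complex analogue of the extremal set used in Theorem~\ref{th2.8}. Put $A=A_{1}\cup A_{2}\subseteq V_{N}$, where
\[
A_{1}=\set{\mathrm{i}^{a}e_{p}:1\le p\le N,\ 0\le a\le 3},\qquad A_{2}=\set{\mathrm{i}^{a}(e_{p}-e_{q}):1\le p\neq q\le N,\ 0\le a\le 3}.
\]
Both $A_1$ and $A_2$ are invariant under multiplication by $\mathrm{i}$ and $e_{p}\in A_{1}$, so $(a)$ and $(b)$ hold. The heart of the matter is a complex version of Lemma~\ref{lem2.7} describing which pairs are \emph{caught}, i.e.\ have their difference in $A$. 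To organize it I would introduce the $4N$ \emph{slots} $(t,v)$ with $t\in\set{1,\dots,N}$ and $v\in\set{1,\mathrm{i},-1,-\mathrm{i}}$, record that $\mathrm{i}^{a}e_{p}$ occupies the single slot $(p,\mathrm{i}^{a})$ while $\mathrm{i}^{a}(e_{p}-e_{q})$ occupies the two antipodal slots $(p,\mathrm{i}^{a})$ and $(q,-\mathrm{i}^{a})$, and then prove by a direct case analysis on supports and phases that
\begin{itemize}
\item[$(1)$] two distinct elements of $A$, at least one of which lies in $A_{2}$, are caught if and only if they occupy a common slot, and
\item[$(2)$] two elements $\mathrm{i}^{a}e_{p},\mathrm{i}^{b}e_{q}$ of $A_{1}$ are caught if and only if $p\neq q$ and $\mathrm{i}^{a}=\mathrm{i}^{b}$.
\end{itemize}
These verifications reduce to the single arithmetic fact that a difference of two \emph{distinct} fourth roots of unity is never a fourth root of unity; this is what forces the only possible cancellations to be the ones recorded above (a shared slot collapses a support-$2$ versus support-$2$ difference back into $A_{2}$, and collapses a support-$1$ versus support-$2$ difference into $A_{1}$).

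Granting $(1)$ and $(2)$, let $B\subseteq A$ be difference-free and set $B_{1}=B\cap A_{1}$, $B_{2}=B\cap A_{2}$. By $(1)$ no slot is used by two members of $B$, so counting slot-occupancies (one per element of $B_{1}$, two per element of $B_{2}$) gives $\abs{B_{1}}+2\abs{B_{2}}\le 4N$; by $(2)$ the members of $B_{1}$ carry pairwise distinct values, so $\abs{B_{1}}\le 4$. Combining the two estimates,
\[
\abs{B}=\abs{B_{1}}+\abs{B_{2}}\le 2N+\tfrac12\abs{B_{1}}\le 2N+2.
\]
Thus $A$ has no difference-free subset of size $2n+1$, giving $N\nrightarrow_{\Complex}2n+1$ for every $N\le n-1$, hence $K_{\Complex}(2n+1)\ge n$; together with Proposition~\ref{prop3.5} this yields $K_{\Complex}(2n+1)=n$. (For $N=n-1$ the bound $2N+2=2n$ is attained, so the example is sharp.) I expect the main obstacle to be the clean proof of the conflict characterization $(1)$–$(2)$, i.e.\ keeping track of all the support-overlap and phase cases and confirming that a partial cancellation lands back in $A_{2}$ exactly when two elements share a slot; once that is established, the slot-counting and the bound $\abs{B_{1}}\le 4$ make the rest pure bookkeeping.
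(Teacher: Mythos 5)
Your proof is correct, but it takes a genuinely different route from the paper. The paper does not redo any complex case analysis: it takes the real extremal set $A\subseteq C_{n-1}$ from Theorem \ref{th2.8} (whose difference-free subsets have at most $n$ elements), forms $\Delta=A\cup\mathrm{i}A$, and observes that since every element of $\Delta$ is either purely real or purely imaginary and $0\notin A$, a difference $x-y$ can land back in $\Delta$ only when $x,y$ are both in $A$ or both in $\mathrm{i}A$; hence any difference-free $B\subseteq\Delta$ splits as $(B\cap A)\cup(B\cap\mathrm{i}A)$ with each piece of size at most $n$, giving $\abs{B}\le 2n$. Your extremal set $A_{1}\cup A_{2}$ is in fact exactly this $\Delta$ for the canonical choice of $A$, but you analyze it directly via a complex analogue of Lemma \ref{lem2.7}: the slot bookkeeping $(p,\mathrm{i}^{a})$, the conflict characterizations $(1)$--$(2)$ (which I checked do hold, the key points being that a difference of two distinct fourth roots of unity is never in $\set{0,\pm1,\pm\mathrm{i}}$ and that a support-size count kills all non-overlapping cases), and the count $\abs{B_{1}}+2\abs{B_{2}}\le 4N$, $\abs{B_{1}}\le 4$. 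What the paper's reduction buys is brevity — the whole lower bound is a few lines once Theorem \ref{th2.8} is available. What your version buys is self-containedness and explicitness: you exhibit the extremal set and prove its property from scratch for every $N\le n-1$ (the paper only treats $N=n-1$, implicitly relying on monotonicity in $N$), and you avoid a small imprecision in the paper's write-up, which asserts the bound $\abs{B\cap A}\le\frac{l-3}{2}+1$ for an arbitrary symmetric $A$ containing the $e_{k}$'s with $0\notin A$ — that statement is only true for the specific extremal $A$ of Theorem \ref{th2.8}, which is the set your construction pins down. The only thing to flag is that your claims $(1)$ and $(2)$ are stated with a plan of verification rather than a written-out case analysis; for a final version those dozen support/phase cases should be displayed, but none of them fails.
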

\begin{proof}
Let $l=2n+1$. If $n=1$, we observe that $K_{\Complex}(3)=1$. Let $n>1$ and assume that $K_{\Complex}(l)<\frac{l-1}{2}$. Then
\begin{equation}\label{eq3.1}
K_{\Complex}(l)\leq\frac{l-1}{2}-1=\frac{l-3}{2}
\end{equation}
Let now $A$ be a subset of $C_{\frac{l-3}{2}}$ satisfying conditions $(a)$ and $(b)$ of 2.3 such that $0\notin A$. We put $\Delta=A\cup\mathrm{i}A$. The set $\Delta$ has the following properties:
\begin{equation}\label{eq3.2}
e_{k}\in\Delta, \,for\,\,\,1\leq k\leq\frac{l-3}{2}
\end{equation}
\begin{equation}\label{eq3.3}
if\,\,\,x, y\in\Delta, then \,\,\,\mathrm{i}x\in\Delta
\end{equation}
\begin{equation}\label{eq3.4}
if \,\,x, y\in\Delta \,\,such\,\, that\,\, x-y \in \Delta, \,\,then\,\, either\,\, x,y\in A\,\, or\,\, x,y \in \mathrm{i}A.
\end{equation}
Properties \eqref{eq3.2} and \eqref{eq3.3} are immediate from the definition of $\Delta$. For property \eqref{eq3.4} let $x,y\in\Delta$ such that $x-y\in\Delta$. Since $\Delta=A\cup\mathrm{i}A$ and $A$ is a subset of $C_{n}$ the coordinates of $x-y$ can be either real or imaginary. Therefore either the coordinates of $x$ and $y$ are real or the coordinates of $x$ and $y$ are imaginary. The last yields the conclusion.

Let $B$ be a subset of $\Delta$, with $|B|=l$ such that $x-y\notin\Delta$, for every $x\neq y\in B$. Then $|B|=|B\cap A|+|B\cap \mathrm{i}A|$. Since $A$ is a subset of $C_{\frac{l-3}{2}}$ satisfying conditions $(a)$ and $(b)$ of \ref{not1.3}, we have that $|B\cap A|$, $|B\cap \mathrm{i}A|\leq\frac{l-3}{2}+1$. So $|B|\leq2(\frac{l-3}{2}+1)=l-1$, a contradiction.
\end{proof}
\begin{prop}\label{prop3.7}
$K_{\Complex}(2n+2)\leq n$, for every $n\in\mathbb{N}$.
\end{prop}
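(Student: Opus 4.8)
The plan is to deduce Proposition~\ref{prop3.7} from an extension step of exactly the type of Proposition~\ref{prop2.5}, but one that gains \emph{two} elements per new coordinate instead of one:
\emph{if $N\xrightarrow[\Complex]{}l$, then $N+1\xrightarrow[\Complex]{}l+2$.}
Granting this step, I would induct from the base $1\xrightarrow[\Complex]{}4$ (equivalently $K_{\Complex}(4)=1$, already recorded in \ref{not2.4}): applying the step $n-1$ times passes from $(N,l)=(1,4)$ to $(n,2n+2)$, i.e. $n\xrightarrow[\Complex]{}2n+2$, which is precisely $K_{\Complex}(2n+2)\le n$.

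To prove the step, let $A\subseteq V_{N+1}$ satisfy $(a)$ and $(b)$ of \ref{not2.4}, writing points of $V_{N+1}$ as $(x,c)$ with $x\in V_{N}$ and $c\in\set{0,\pm1,\pm\mathrm{i}}$. Then $pr_{N}A$ again satisfies $(a)$ and $(b)$ (it is closed under multiplication by $\mathrm{i}$ because $A$ is), so by $N\xrightarrow[\Complex]{}l$ there is a difference-free set $B=\set{x_{1},\ldots,x_{l}}\subseteq pr_{N}A$, i.e. $x_{i}-x_{j}\notin pr_{N}A$ for $i\neq j$. Note $0\notin B$: if $0$ and some $x_{j}\neq0$ both lay in $B$, then $0-x_{j}=-x_{j}\in pr_{N}A$ (as $pr_{N}A$ is symmetric), contradicting difference-freeness. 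I would then produce $B'$ of size $l+2$ by extending each $x_{i}$ to a point $(x_{i},\xi_{i})\in A$ and adjoining the two points $z_{1}=(0,1)=e_{N+1}$ and $z_{2}=(0,\mathrm{i})=\mathrm{i}e_{N+1}$, both in $A$ by $(a)$ and $(b)$. The extensions are pairwise difference-free by the analogue of Fact~\ref{fact1.4}, and $z_{1}-z_{2}=(0,1-\mathrm{i})\notin A$ since $1-\mathrm{i}\notin\set{0,\pm1,\pm\mathrm{i}}$; distinctness is clear because $0\notin B$.

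The heart of the argument is choosing the $\xi_{i}$ so that each $z_{j}$ is difference-free from every $(x_{i},\xi_{i})$. For each $i$ set $T_{i}=\set{c:(x_{i},c)\in A}\neq\emptyset$. Property $(b)$ gives $(x,c)\in A\iff(-x,-c)\in A$, so the fibre over $-x_{i}$ equals $-T_{i}$; hence both
\[
(x_{i},\xi_{i})-(0,\eta)=(x_{i},\xi_{i}-\eta)\quad\text{and}\quad(0,\eta)-(x_{i},\xi_{i})=(-x_{i},\eta-\xi_{i})
\]
fail to lie in $A$ precisely when $\xi_{i}-\eta\notin T_{i}$. Thus it suffices to pick $\xi_{i}\in T_{i}$ with $\xi_{i}-1\notin T_{i}$ and $\xi_{i}-\mathrm{i}\notin T_{i}$. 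I would achieve this by choosing $\xi_{i}\in T_{i}$ minimizing $\phi(c)=\RE c+\IM c$: since $\phi(\xi_{i}-1)=\phi(\xi_{i}-\mathrm{i})=\phi(\xi_{i})-1<\min_{T_{i}}\phi$, neither $\xi_{i}-1$ nor $\xi_{i}-\mathrm{i}$ can belong to $T_{i}$. This makes $B'=\set{(x_{1},\xi_{1}),\ldots,(x_{l},\xi_{l}),z_{1},z_{2}}$ difference-free of cardinality $l+2$, establishing the step.

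The main obstacle — and the reason the complex case gains two elements where the real Proposition~\ref{prop2.5} gains only one — is exactly that the two adjoined points must be taken \emph{asymmetrically}, as $1$ and $\mathrm{i}$ rather than $1$ and $-1$. Indeed, if one tried to adjoin $(0,1)$ and $(0,-1)$, a full fibre $T_{i}=\set{0,\pm1,\pm\mathrm{i}}$ would admit no admissible $\xi_{i}$ (each $\xi_{i}\pm1$ re-enters $T_{i}$), whereas the monotone functional $\phi$ rules this out for the pair $\set{1,\mathrm{i}}$. Checking that this simultaneous choice is possible for \emph{every} fibre $T_{i}$ is the only genuine content; the remaining distinctness and difference-freeness verifications are routine.
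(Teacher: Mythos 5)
Your proof is correct, and it takes a genuinely different route from the paper's. The paper obtains the bound by reducing to the real case: it forms $\Delta=A\cup\mathrm{i}A$ for a real $A\subseteq C_{n}$ with $0\notin A$, extracts a difference-free $B\subseteq A$ with $|B|=n+1$ via Theorem \ref{th2.8}, and takes $\Gamma=B\cup\mathrm{i}B$; note that, as printed, this exhibits the required $(2n+2)$-point set only inside sets of the special form $A\cup\mathrm{i}A$, whereas $n\xrightarrow[\Complex]{}2n+2$ demands it for \emph{every} $A\subseteq V_{n}$ satisfying $(a)$ and $(b)$. Your argument is instead a self-contained induction on the number of coordinates: a complex analogue of Proposition \ref{prop2.5} gaining two elements per new coordinate, started from $K_{\Complex}(4)=1$ (which checks out: $\set{1,\mathrm{i},-1,-\mathrm{i}}$ is difference-free in $V_{1}$). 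The step is sound: $pr_{N}A$ inherits $(a)$ and $(b)$; the extensions are mutually difference-free by the analogue of Fact \ref{fact1.4}; the fibre over $-x_{i}$ is indeed $-T_{i}$ by applying $(b)$ twice; and choosing $\xi_{i}$ to minimize $\RE c+\IM c$ over $T_{i}$ excludes $\xi_{i}-1$ and $\xi_{i}-\mathrm{i}$ from $T_{i}$ simultaneously, replacing the case analysis of Claims 1 and 2 of Proposition \ref{prop2.5} by a one-line monotonicity argument. What this buys is a proof valid for arbitrary $A\subseteq V_{n}$ and a clear identification of where the complex structure enters (the asymmetric pair $\set{1,\mathrm{i}}$). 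One small inaccuracy in your closing motivational remark: the full fibre $\set{0,\pm1,\pm\mathrm{i}}$ actually does admit $\xi_{i}=\pm\mathrm{i}$ for the symmetric pair $(0,\pm1)$, since $\pm\mathrm{i}\pm1\notin\set{0,\pm1,\pm\mathrm{i}}$; the fibre that genuinely obstructs the symmetric choice is $\set{-1,0,1}$. This does not affect the correctness of the proof itself.
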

\begin{proof}
As in the preceding proof we consider a subset $A$ of $C_{n}$ satisfying conditions $(a)$ and $(b)$ of \ref{not1.3} such that $0\notin A$. Observe that $A\cap \mathrm{i}A=\emptyset$ and consider the set $\Delta=A\cup\mathrm{i}A$. Theorem \ref{th2.8} yields that there exists a subset $B$ of $A$, with $|B|=n+1$, such that $x-y\notin A$, for every $x\neq y \in B$. Putting $\Gamma=B\cup\mathrm{i}B$, observe that $x-y\notin \Delta$ for every $x\neq y\in \Gamma$. Since $|\Gamma|=2n+2$, we take that $K_{\Complex}(2n+2)\leq n$.
\end{proof}
\begin{thm}\label{th3.8}
$K_{\Complex}(2n+v)=n$, for every $n\in\mathbb{N}$ and $v=1,2$.
\end{thm}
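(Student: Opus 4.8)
Theorem \ref{th3.8} asserts that $K_{\Complex}(2n+v)=n$ for every $n\in\mathbb{N}$ and $v\in\{1,2\}$. The plan is to deduce this as a bookkeeping consequence of the three preceding results, so that almost no new argument is required.

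Theorem \ref{th3.8} asserts that $K_{\Complex}(2n+v)=n$ for every $n\in\mathbb{N}$ and $v\in\{1,2\}$. My plan is to obtain it as a direct bookkeeping consequence of the three immediately preceding results together with the monotonicity of $K_{\Complex}$ recorded in Notation \ref{not2.4}, so that essentially no new computation is needed.

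First I would dispose of the case $v=1$: here the assertion $K_{\Complex}(2n+1)=n$ is precisely the content of Proposition \ref{prop3.6}, so nothing further is required.

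For the case $v=2$ I would prove the two inequalities separately. The upper bound $K_{\Complex}(2n+2)\leq n$ is exactly Proposition \ref{prop3.7}. For the lower bound I would invoke the fact, noted in Notation \ref{not2.4}, that $K_{\Complex}(l)\leq K_{\Complex}(l+1)$ for every $l$; applying this with $l=2n+1$ gives $K_{\Complex}(2n+1)\leq K_{\Complex}(2n+2)$, and since $K_{\Complex}(2n+1)=n$ by Proposition \ref{prop3.6}, we conclude $K_{\Complex}(2n+2)\geq n$. Combining the two bounds yields $K_{\Complex}(2n+2)=n$, completing the case $v=2$ and hence the theorem.

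Since every ingredient is already in place, there is no genuine obstacle here; the only point worth stating explicitly is that the lower bound for $v=2$ is not established directly but is inherited from the odd case $2n+1$ via monotonicity. This reflects the natural hierarchy of the section: Proposition \ref{prop3.6} pins down the odd values exactly, Proposition \ref{prop3.7} supplies the matching upper bound for the even values, and monotonicity closes the remaining gap from below.
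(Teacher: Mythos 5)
Your argument is correct and coincides with the paper's own proof, which likewise obtains the case $v=1$ from Proposition \ref{prop3.6} and the case $v=2$ by combining the upper bound of Proposition \ref{prop3.7} with the monotonicity inequality $K_{\Complex}(2n+1)\leq K_{\Complex}(2n+2)$. No further comment is needed.
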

\begin{proof}
The conclusion follows easily from Propositions \ref{prop3.6}, \ref{prop3.7} and the inequality $K_{\Complex}(2n+1)\leq K_{\Complex}(2n+2)$.
\end{proof}
As an immediate consequence of Theorem \ref{th3.8} we get the following.
\begin{thm}\label{th3.9}
For every $n\in\mathbb{N}$ and each subset $A$ of $V_{n}$ satisfying conditions
\begin{itemize}
\item [$(a)$] $e_{k}\in A$ for $1\leq k\leq n$ and
\item [$(b)$] if $x\in A$, then $\mathrm{i}x\in A$,
\end{itemize}
there exists a subset $B$ of $A$ with $|B|=2n+2$ which is difference-free with respect to $A$.
\end{thm}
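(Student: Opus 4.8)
The plan is to read the theorem off directly from Theorem \ref{th3.8}; no new construction is needed, because the statement is nothing more than the logical unwinding of the relation $n \xrightarrow[\Complex]{} 2n+2$. First I would invoke Theorem \ref{th3.8} with $v=2$ to obtain $K_{\Complex}(2n+2)=n$. Since $K_{\Complex}(2n+2)$ is by Notation \ref{not2.4} the \emph{least} $N\in\mathbb{N}$ for which $N \xrightarrow[\Complex]{} 2n+2$ holds, the fact that this least value is attained at $N=n$ says in particular that $n \xrightarrow[\Complex]{} 2n+2$.

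Next I would expand the meaning of $n \xrightarrow[\Complex]{} 2n+2$ according to Notation \ref{not2.4}: it asserts that there is \emph{no} subset $A$ of $V_n$ simultaneously satisfying conditions $(a)$, $(b)$ and $(c)$ of that notation. I would then observe that conditions $(a)$ and $(b)$ of the present theorem coincide verbatim with $(a)$ and $(b)$ of \ref{not2.4}. Consequently, for any $A\subseteq V_n$ satisfying the hypotheses $(a)$ and $(b)$, condition $(c)$ of \ref{not2.4} \emph{must fail}.

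Finally I would record that the failure of $(c)$ is exactly the conclusion sought: negating the universally quantified statement $(c)$ yields ``there exists a subset $B$ of $A$ with $|B|=2n+2$ such that $x-y\notin A$ for all distinct $x,y\in B$,'' which is precisely the statement that $B$ is difference-free with respect to $A$. The only point requiring care is this negation together with the verification that the size constraint it delivers is exactly $|B|=2n+2$ rather than some weaker bound; this is bookkeeping, not a genuine obstacle, since all the combinatorial content was already established in Theorem \ref{th3.8}.
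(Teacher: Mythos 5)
Your proposal is correct and is exactly the paper's route: the authors state Theorem \ref{th3.9} as an immediate consequence of Theorem \ref{th3.8}, obtained by reading off $n\xrightarrow[\Complex]{}2n+2$ from $K_{\Complex}(2n+2)=n$ and negating condition $(c)$ of Notation \ref{not2.4}. Nothing further is needed.
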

As in the cases of Theorem \ref{th1.3} and Theorem \ref{th3.3} the above results imply the following Theorem.
\begin{thm}\label{th3.10}
Let $X$ be a finite dimensional complex Banach space with $\dim X=n$. Then for any Auerbach basis $\set{(x_{k},x_{k}^{*}): 1\leq k\leq n}$ of $X$ there exist $(2n+2)$-linear combinations $z_{1}, \ldots, z_{2n+2}$ of the vectors $x_{1}, \ldots, x_{n}$ with coordinates $0,\pm 1,\pm \mathrm{i}$ such that $\norm{z_{k}}=1$, for $1\leq k\leq 2n+2$ and $\norm{z_{k}-z_{l}}>1$ for $1\leq k<l\leq 2n+2$.
\end{thm}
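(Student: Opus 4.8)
The plan is to run verbatim the scheme by which Theorem \ref{th2.2} (and hence Theorem \ref{th1.3}) was deduced from the combinatorial statement, only now feeding in the complex counting result $K_{\Complex}(2n+2)=n$ of Theorem \ref{th3.8} (equivalently Theorem \ref{th3.9}) in place of Theorem \ref{th1.2}. Concretely, I would fix the Auerbach basis $\set{(x_{k},x_{k}^{*}):1\leq k\leq n}$, set
\[
E_{n}=\set{x=\sum_{k=1}^{n}a_{k}x_{k}:\norm{x}=1\ \text{and}\ a_{k}\in\set{0,\pm 1,\pm\mathrm{i}}},
\]
and consider the invertible operator $T:X\to(\Complex^{n},\norm{\cdot}_{\infty})$, $T(x)=(x_{k}^{*}(x))$. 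Since $\norm{x_{k}^{*}}=1$ we have $\norm{T(x)}_{\infty}\leq\norm{x}$ for every $x$, while $T(x_{k})=e_{k}$; moreover $T(E_{n})\subseteq V_{n}$. I then put $A=T(E_{n})$.

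The one genuinely complex ingredient — and the step I expect to need the most care — is the analogue of the remark ``$x\neq y\in E$ and $\norm{x-y}\leq 1$ imply $x-y\in E$'' used in Theorem \ref{th2.2}. Writing $x-y=\sum_{k}c_{k}x_{k}$ with $c_{k}=a_{k}-b_{k}$, each $c_{k}$ is a \emph{Gaussian integer}, being a difference of elements of $\set{0,\pm 1,\pm\mathrm{i}}\subseteq\mathbb{Z}[\mathrm{i}]$, and $\abs{c_{k}}\leq\norm{T(x-y)}_{\infty}\leq\norm{x-y}\leq 1$. The decisive point is that the only Gaussian integers of modulus at most $1$ are $0,\pm 1,\pm\mathrm{i}$ (the equation $a^{2}+b^{2}\leq 1$ has no integer solution other than these), so every $c_{k}\in\set{0,\pm 1,\pm\mathrm{i}}$; since $x\neq y$ some $c_{k}\neq 0$, whence $\norm{x-y}\geq\max_{k}\abs{c_{k}}=1$ and therefore $\norm{x-y}=1$. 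Thus $x-y\in E_{n}$ and $T(x)-T(y)=T(x-y)\in A$. This is precisely where the real argument — which only had to rule out the value $\pm 2$ — is replaced by the geometry of the Gaussian lattice; the rest is formally identical.

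Next I would check that $A$ satisfies hypotheses $(a)$ and $(b)$ of \ref{not2.4}. Condition $(a)$ holds because $x_{k}\in E_{n}$ gives $e_{k}=T(x_{k})\in A$. Condition $(b)$ holds because, for $x\in E_{n}$, the vector $\mathrm{i}x=\sum_{k}(\mathrm{i}a_{k})x_{k}$ again has coordinates in $\set{0,\pm 1,\pm\mathrm{i}}$ and $\norm{\mathrm{i}x}=\norm{x}=1$ (multiplication by a unit–modulus scalar is an isometry of the complex space $X$); hence $\mathrm{i}x\in E_{n}$ and $\mathrm{i}z=T(\mathrm{i}x)\in A$ whenever $z=T(x)\in A$. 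Applying Theorem \ref{th3.9} to $A$ then yields a subset $B\subseteq A$ with $\abs{B}=2n+2$ that is difference-free with respect to $A$.

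Finally I would set $\set{z_{1},\ldots,z_{2n+2}}=T^{-1}(B)$: these are $2n+2$ distinct elements of $E_{n}$ (distinct because $T$ is injective), so each is a $0,\pm 1,\pm\mathrm{i}$–combination of $x_{1},\ldots,x_{n}$ with $\norm{z_{k}}=1$. If some pair satisfied $\norm{z_{k}-z_{l}}\leq 1$ with $k\neq l$, the observation of the second paragraph would give $T(z_{k})-T(z_{l})\in A$, contradicting that $B$ is difference-free with respect to $A$. Hence $\norm{z_{k}-z_{l}}>1$ for all $1\leq k<l\leq 2n+2$, which is the assertion. The remaining points (injectivity of $T$ and the elementary coordinate bookkeeping) are routine, so the whole proof reduces to the Gaussian-integer remark together with Theorem \ref{th3.9}.
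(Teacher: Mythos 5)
Your proposal is correct and follows exactly the route the paper intends (the paper omits the proof of Theorem \ref{th3.10}, noting only that it follows as in Theorems \ref{th1.3} and \ref{th3.3}): transfer via $T(x)=(x_{k}^{*}(x))$, verify that $A=T(E_{n})$ satisfies the hypotheses of Theorem \ref{th3.9}, and use the observation that a difference of two unit vectors of $E_{n}$ with norm at most $1$ again lies in $E_{n}$. Your Gaussian-integer remark is precisely the right replacement for the real-case step, so nothing is missing.
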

\underline{\textit{Remarks}.} (1) Theorem \ref{th3.10} applied to complex Banach spaces of dimension $n$, gives more information that Theorem \ref{th1.3} for real Banach spaces of dimension $2n$. Indeed, let $X$ be a complex Banach space with $\dim X=n$. Then by Theorem \ref{th3.10} we can find at least $2n+2$ norm-one vectors which are 1-separated. On the other hand, if we apply Theorem \ref{th1.3} on the underlying real Banach space $X$ (which has dimension $2n$ over $\Real$) we can find at least $2n+1$ norm-one vectors which are 1-separated. \\
(2) The uncountable analogue of Kottman's Lemma (and of its consequence which is Theorem \ref{th2.2}) is false. Indeed, it can be shown using $\Delta$-system Lemma as in Remark (2) of \cite{Elton} that, if $\Gamma$ is any infinite set and $\set{x_{\alpha}: \alpha \in A}$ is any set of norm-1 finitely supported vectors of $c_{0}(\Gamma)$ with $\norm{x_{\alpha}-x_{\beta}}>1$, for $\alpha\neq\beta \in A$, then $A$ must be countable. On the other hand, if $\Gamma$ is uncountable it can be shown by transfinite induction, the existence of an $\omega_{1}$-sequence $\set{x_{\alpha}:\alpha<\omega_1}$ in the unit sphere of $c_{0}(\Gamma)$ so that $\norm{x_{\alpha}-x_{\beta}}>1$, for $\alpha<\beta<\omega_1$.

\vspace{0.5cm}
UNIVERSITY OF ATHENS, DEPARTMENT OF MATHEMATICS, \\PANEPISTIMIOUPOLIS, 15784 ATHENS, GREECE\\
\textit{E-mail address: e.glakousakis@gmail.com}\\ \\ \\
UNIVERSITY OF ATHENS, DEPARTMENT OF MATHEMATICS, \\PANEPISTIMIOUPOLIS, 15784 ATHENS, GREECE\\
\textit{E-mail address: smercour@math.uoa.gr}
\end{document}